\numberwithin{equation}{section}
\newtheorem{theorem}{Theorem}[section]
\newtheorem{prop}[theorem]{Proposition}
\newtheorem{lemma}[theorem]{Lemma}
\newtheorem{cor}[theorem]{Corollary}
\newtheorem{conj}[theorem]{Conjecture}
\theoremstyle{definition}
\newtheorem{definition}[theorem]{Definition}
\newtheorem{example}[theorem]{Example}
\newtheorem{remark}[theorem]{Remark}
\def\<{{\langle}}
\def\>{{\rangle}}
\def\D{{\Delta}}
\def\e{{\epsilon}}
\def\l{{\lambda}}
\def\L{\Lambda}
\def\s{\sigma}
\def\Si{\Sigma}
\def\Rd{{\cal R}_d}
\def\ni{\noindent}
\def\bs{\bigskip}
\def\Z{\mathbb Z}
\def\S{\mathbb S}
\def\Q{\mathbb Q}
\def\doteq{\ {\buildrel \cdot \over =}\ }
\DeclareMathOperator{\rk}{rk}
\begin{document}

\title{Twisted Alexander Invariants of Twisted Links}

\author{Daniel S. Silver \and Susan G. Williams\thanks{Both authors partially supported by NSF grant
DMS-0706798.} \\ {\em
{\small Department of Mathematics and Statistics, University of South Alabama}}}

\maketitle %{\setlength{\linewidth}{2in}

\begin{abstract} Let $L= \ell_1 \cup \cdots \cup \ell_{d+1}$ be an oriented link in $\S^3$, and let $L(q)$ be the $d$-component link $\ell_1 \cup \cdots \cup \ell_d$ regarded in the homology 3-sphere that results from performing $1/q$-surgery on $\ell_{d+1}$. Results about the Alexander polynomial and twisted Alexander polynomials of $L(q)$ corresponding to finite-image representations are obtained. The behavior of the invariants as $q$ increases without bound is described.\medskip

Keywords: Knot, link, twisted Alexander polynomial, Mahler measure.

MSC 2010:  
Primary 57M25; secondary 37B10, 37B40.
\end{abstract} 

%\noindent {\it Keywords:} Knot, twisted Alexander polynomial, representation shift, Mahler measure\begin{footnote}{Mathematics Subject Classification 2010:  
%Primary 57M25; secondary 37B10, 37B40.}\end{footnote}

\section{Introduction.} Let $L = \ell_1 \cup \cdots \cup \ell_{d+1}$ be an oriented 
$(d+1)$-component link in the 3-sphere $\S^3$. For any positive integer $q$, let $L(q)$ be the $d$-component link $\ell_1 \cup \cdots \cup \ell_d$ regarded in the homology 3-sphere that results from {\it $1/q$-surgery} on $\ell_{d+1}$, removing a closed tubular neighborhood of $\ell_{d+1}$ and then replacing it in such a way that its meridian wraps once about the meridian of $\ell_{d+1}$ and $q$ times around the longitude. If $\ell_{d+1}$ is trivial and bounds a disk $D$, then $L(q)$ is a link in $\S^3$, and it can be obtained from $\ell_1 \cup \cdots \cup \ell_d$ by giving $q$ full twists to those strands that pass through $D$. 

In previous work \cite{swJLM}, we considered the multivariable Alexander polynomial $\D_{L(q)}$ and the limiting behavior of its Mahler measure as $q$ increases without bound. The case in which $\ell_{d+1}$ has zero linking number with each of the other components of $L$ was treated separately and combinatorially. The first goal here is to provide a more topological perspective of this case. The second goal is to generalize our twisting results for Alexander polynomials twisted by representations of the link group in a finite group.

We are grateful to Stefan Friedl and Jonathan Hillman for suggestions and helpful remarks. 

\section{Twisting and Mahler measure.} We survey some results about Mahler measure and Alexander polynomials, and offer motivation for the results that follow. 

\begin{definition} The {\it Mahler measure} $M(f)$ of a nonzero integral polynomial in $d \ge 1$ variables is $$M(f) = \exp \int_0^1 \cdots \int_0^1 log|f(e^{2 \pi \theta_1}, \ldots, e^{2 \pi \theta_d})| d \theta_1 \cdots d \theta_d.$$ The Mahler measure of the zero polynomial is $0$. \end{definition}

\begin{remark} (1) In the case of a polynomial in a single variable, Jensen's formula implies that $M(f)$ is equal to 
$$|c| \prod \max \{1, |r_i|\},$$ where $c$ is the leading coefficient of $f$ and 
the $r_i$ are the zeros (with possible multiplicity) of $f$. A proof of this and the following fundamental facts about Mahler measure can be found in \cite{ew}.\smallskip

\item{}(2) $M(fg) = M(f)M(g)$ for any nonzero polynomials $f, g$. We define the Mahler measure of a rational function $f/g$, $g\ne0$,  to be $M(f)/M(g)$. \smallskip

\item{}(3) $M(f) = 1$ if and only if $f$ is the product of a unit and generalized cyclotomic polynomials.  A {\it generalized cyclotomic polynomial} is a cyclotomic polynomial evaluated at a monomial; e.g., 
$ (t_1t_2)^2- t_1t_2 +1.$ \smallskip

\item{}(4) The Mahler measure of a nonzero Laurent polynomial can be defined either directly from the definition or by normalizing, multiplying by a monomial, so that all exponents are nonnegative.

\end{remark}

The Mahler measure of the Alexander polynomial of a link provides a measure of growth of homology torsion of its finite abelian branched covers. 
We identify $H_1(\S^3 \setminus L)$ with the free abelian multiplicative group $\Z^d$ generated by $t_1, \ldots, t_d$.

\begin{theorem} \label{growth} \cite{swTOP2} Let $L\subset \S^3$ be an oriented $d$-component link with nonzero Alexander polynomial $\D_L$. Then 
$$\log M(\D_L) = \limsup_{\< \L \> \to \infty} {1 \over |\Z^d/\L|} \log b_\L,$$
where $\L$ is a finite-index subgroup of $\Z^d$, $\< \L \> =\min \{|v| \mid v \in \L\setminus \{0\} \}$, $b_\L$ is the order of the torsion subgroup of 
$H_1(M_\L, \Z)$ and $M_\L$ is the associated abelian cover of $\S^3$ branched over $L$. In the case that $L$ is a knot (that is, $d=1$), limit superior can be replaced by an ordinary limit. \end{theorem}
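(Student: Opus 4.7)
The plan is to reduce $b_\L$, via the Alexander matrix, to a product of evaluations of $\D_L$ at the characters of the finite quotient $\Z^d/\L$, and then to recognize the normalized logarithm of this product as a Riemann sum converging to the Mahler measure integral as $\<\L\>\to\infty$.

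First, I would take an Alexander matrix $A$ with entries in $\Z[t_1^{\pm 1},\ldots,t_d^{\pm 1}]$ obtained from a Wirtinger presentation of $\pi_1(\S^3\setminus L)$ via Fox calculus, so that $H_1$ of the maximal abelian cover is presented by $A$. The first homology of the unbranched cover corresponding to $\Z^d\to\Z^d/\L$ is presented by the reduction of $A$ modulo $\L$, and passage to the branched cover $M_\L$ amounts to killing the meridional generators. Tensoring with $\C$ and using the decomposition $\C[\Z^d/\L]\cong\prod_\chi\C$ indexed by characters $\chi$ of $\Z^d/\L$ converts this reduction to a direct sum over $\chi$ of the numerical matrices obtained by substituting $\chi(t_i)$ for $t_i$. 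A Fox--Weber--type formula, in the multivariable form due to Mayberry--Murasugi and Sakuma, then yields
$$b_\L \doteq \prod_{\chi\neq 1} |\D_L(\chi)|,$$
provided $\D_L$ does not vanish at any nontrivial character of $\Z^d/\L$; characters at which $\D_L$ vanishes contribute to the free rank of $H_1(M_\L)$ and must be excised from the product.

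Next, taking logarithms and normalizing gives
$$\frac{1}{|\Z^d/\L|}\log b_\L \;=\; \frac{1}{|\Z^d/\L|}\sum_{\chi\neq 1}\log|\D_L(\chi)|.$$
The characters of $\Z^d/\L$ are naturally identified with the finite subgroup $\L^*/\Z^d$ of the torus $\mathbb{T}^d$, where $\L^*$ denotes the dual lattice. The hypothesis $\<\L\>\to\infty$ forces these subgroups to equidistribute on $\mathbb{T}^d$ with respect to Haar measure, so the right-hand side is a Riemann sum for $\int_{\mathbb{T}^d}\log|\D_L|\,d\mu = \log M(\D_L)$.

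The principal obstacle is controlling characters at which $|\D_L|$ is very small or zero, each of which contributes a large negative term to the sum. This is why the statement records $\limsup$ rather than $\lim$: one must rule out subsequences along which $b_\L$ is sharply deflated by near-vanishing of $\D_L$ at some nontrivial character. The remedy is a multivariate equidistribution estimate of Boyd--Lawton type, guaranteeing that averages of $\log|\D_L|$ over equidistributing finite subgroups of $\mathbb{T}^d$ converge to the integral despite the singularities of the integrand. In the knot case $d=1$ the zero set of $\D_L$ on the unit circle is finite, and Jensen's formula together with a quantitative lower bound for $|\D_L(\zeta)|$ at primitive roots of unity $\zeta$ (of Gelfond type) upgrades $\limsup$ to a genuine limit, accounting for the final clause of the theorem.
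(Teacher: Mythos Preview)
The paper does not actually prove this theorem; it is quoted from \cite{swTOP2} and stated without proof, serving only as background and motivation in Section~2. There is therefore no proof in the present paper to compare your proposal against.

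That said, your outline is essentially the standard route to this result and is broadly correct: the Mayberry--Murasugi/Sakuma formula expresses $b_\L$ (up to the excision of characters where $\D_L$ vanishes) as $\prod_{\chi\ne 1}|\D_L(\chi)|$, and the normalized logarithm is then a Riemann sum for the Mahler integral. Two small caveats. First, the condition $\<\L\>\to\infty$ does not by itself give equidistribution of the dual group in $\mathbb{T}^d$ in the usual sense; rather, it guarantees that the dual groups become dense enough that the averages of $\log|\D_L|$ over them converge to the integral, which is a somewhat different (and more delicate) statement than ordinary weak-$*$ equidistribution because $\log|\D_L|$ is unbounded. Second, the convergence argument in the multivariable case is not really a Boyd--Lawton estimate (which concerns restricting a multivariable polynomial to a one-dimensional subtorus) but rather the Lind--Schmidt--Ward machinery from algebraic dynamics, which controls torsion growth for $\Z^d$-actions via entropy; this is the engine behind \cite{swTOP2} and is what handles the singular set of $\log|\D_L|$ and explains the appearance of $\limsup$. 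Your identification of the Gelfond-type lower bound as the reason the knot case yields an honest limit is correct.
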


We recall that the Alexander polynomial $\D_L$ is the first term in a sequence of polynomial invariants that are successive divisors (see below). The authors conjectured \cite{swTOP2} that when $\D_L$ vanishes, the conclusion of Theorem \ref{growth} holds with $\D_L$ replaced by the first nonvanishing higher Alexander polynomial. The conjecture was recently proved by T. Le \cite{le}. 

Much of the interest in Mahler measures of Alexander polynomials is motivated by an open question posed by D.H. Lehmer in 1933. \smallskip

\ni {\bf Lehmer's Question.}  Is 1 a limit point of the set of Mahler measures of integral polynomials in a single variable? \smallskip 

There are no known values in this set between 1 and the Mahler measure $ M({\cal L}) = 1.17628\ldots$ of a polynomial found by Lehmer,
$${\cal L}(t)= t^{10}+t^9-t^7-t^6-t^5-t^4-t^3+t+1.$$

Lehmer's polynomial ${\cal L}(t)$ has a single root outside the unit circle. When $t$ is replaced by $-t$, the result is the Alexander polynomial of a fibered, hyperbolic knot, the $(-2, 3, 7)$-pretzel knot. 

A method for constructing many polynomials with small Mahler measure greater than 1 is to begin with a product of cyclotomic polynomials and perturb the middle coefficient. Lehmer's polynomial arises as  $\phi_1^2 \phi_2^2 \phi_3^2\phi_6 - t^5$, where $\phi_i$ is the $i$th cyclotomic polynomial. 
In fact, all known Mahler measures less than 1.23 have been found in this way \cite{mpvMC}. 

Our principal motivation for studying the twisting effects on Alexander polynomials comes from a topological analogue of the above procedure: Starting with a periodic $n$-braid (the closure of which has an Alexander polynomial with Mahler measure equal to 1), one perturbs it by twisting $m$ consecutive strands, where $m < n$, and then forms the closure. Knots $k$ obtained this way are known as {\it twisted torus knots} \cite{dAGT}, and the Mahler measure of $\D_k$ is often small but greater than 1. In particular, the closure of the 3-braid $(\s_1\s_2 )^7 \s_1^{-2}$ is the $(-2, 3, 7)$-pretzel knot. %mention geometrically simple, small volume? CallahanDW, ChampKP

Observe that $(\s_1\s_2 )^7 \s_1^{-2} = (\s_1\s_2 )^6 \s_1\s_2\s_1^{-2}$ is the product of the 3-braid $B= \s_1\s_2\s_1^{-2}$ and two full twists on all strands. Let $L$ be the 2-component link $\ell_1 \cup \ell_2$, where $\ell_1$ is the closure of $B$ while $\ell_2$ is an encircling unknot. 
With orientations chosen appropriately, the $(-2, 3, 7)$-pretzel knot is $L(2)$, the result of $1/2$-surgery on $\ell_2$. 

Since $L(2)$ is a perturbation of the closure of the periodic braid $(\s_1\s_2)^7$, we might expect that replacing $(\s_1\s_2)^6$ with higher powers $(\s_1\s_2)^{3q}$ would produce twisted torus knots with small Mahler measure. Indeed, calculations suggest that $M(\D_{L(q)})$ converges to a relatively low value of $1.285\ldots$. The following theorem shows that this is in fact so, and moreover the limit is the Mahler measure of $\D_L$. 

\begin{theorem} \label{limit} \cite{swJLM} Let $L= \ell_1 \cup \cdots \cup \ell_{d+1}$ be an oriented link in $\S^3$. If $\ell_{d+1}$ has non-zero linking number with some other component, then $$\lim_{q \to \infty} M(\D_{L(q)}) = M(\D_L).$$ \end{theorem}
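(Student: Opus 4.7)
The plan is to reduce the statement to a Boyd--Lawton-type convergence theorem for Mahler measures under monomial substitutions, after first exhibiting $\D_{L(q)}$ as such a specialization of $\D_L$. Thus the proof splits naturally into an algebraic step (a polynomial identity) and an analytic step (equidistribution of a sequence of subtori).

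For the algebraic step, set $a_i = \text{lk}(\ell_i, \ell_{d+1})$ for $1 \le i \le d$ and write $\tau_q = t_1^{-q a_1} \cdots t_d^{-q a_d}$. The exterior of $L(q)$ is obtained from $\S^3 \setminus L$ by Dehn filling along $\ell_{d+1}$, which adjoins the relation $\mu_{d+1} \lambda_{d+1}^q = 1$; on abelianizations this forces $t_{d+1} = \tau_q$. A Fox-calculus computation --- augmenting the Alexander matrix of $L$ with the row corresponding to the surgery relator and then passing to $\Z[t_1^{\pm 1}, \ldots, t_d^{\pm 1}]$ by $t_{d+1} \mapsto \tau_q$ --- should yield, up to a unit,
$$\D_{L(q)}(t_1, \ldots, t_d) \doteq \D_L(t_1, \ldots, t_d, \tau_q).$$
This is the natural Dehn-surgery analogue of the Torres formula (which is essentially the case $q=0$), and a version of it is implicit in the authors' earlier treatment of $\D_{L(q)}$.

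For the analytic step, unfolding the defining integral gives
$$\log M(\D_L(t_1, \ldots, t_d, \tau_q)) = \int_0^1\!\!\cdots\int_0^1 \log\bigl|\D_L\bigl(e^{2\pi i\theta_1}, \ldots, e^{2\pi i\theta_d}, e^{-2\pi i q \sum a_j \theta_j}\bigr)\bigr|\, d\theta_1\cdots d\theta_d,$$
i.e.\ the integral of $\log|\D_L|$ over the $d$-dimensional subtorus of $(\S^1)^{d+1}$ parametrized by $(\theta_1, \ldots, \theta_d)\mapsto (\theta_1, \ldots, \theta_d, -q\sum a_j\theta_j)$. Because $(a_1, \ldots, a_d)\ne 0$, any nontrivial character $\exp(2\pi i\sum_{j=1}^{d+1} n_j\theta_j)$ of $(\S^1)^{d+1}$ restricts to the subtorus as $\exp(2\pi i\sum_{j=1}^d(n_j - qa_j n_{d+1})\theta_j)$, which is nontrivial (and hence integrates to zero) for all but at most one value of $q$. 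By Fourier approximation, together with a standard cutoff near the zero set of $\D_L$ to control the logarithmic singularity, the integrals over these subtori converge to the integral over all of $(\S^1)^{d+1}$, namely $\log M(\D_L)$. This is precisely a multivariable instance of the Boyd--Lawton theorem applied to our setting.

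The main obstacle is the algebraic identity. Because the Alexander matrix of $L(q)$ carries an extra surgery row, one must verify that its contribution is a unit (or at worst a factor like $\tau_q - 1$ with trivial Mahler measure) and does not introduce a polynomial factor whose Mahler measure could perturb the limit. The hypothesis that $\ell_{d+1}$ has nonzero linking number with some other component is essential in both steps: algebraically it guarantees $\tau_q$ is a nontrivial monomial, and analytically it is exactly what drives the equidistribution. Once the identity is established, the analytic convergence is standard.
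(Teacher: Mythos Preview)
Your two-step outline---relate $\D_{L(q)}$ to $\D_L(t_1,\ldots,t_d,\tau_q)$ up to factors of trivial Mahler measure, then invoke Boyd--Lawton---is exactly the strategy of the original result in \cite{swJLM}, and is what the present paper carries out in the proof of the twisted analogue in Section~4. The difference lies in how the algebraic step is handled. You propose augmenting the Alexander matrix of $L$ with the surgery row and tracking its effect via Fox calculus; the paper instead argues homologically, using the long exact sequence of the pair $X_L\subset X_{L(q)}$ together with excision to identify the relative group $H_2(X_{L(q)},X_L)$ (with the appropriate $\Z[\Z^d]$-coefficients) with $\Z[\Z^d]/(\tau_q-1)$, whose module order is a product of generalized cyclotomic polynomials. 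This cleanly disposes of the ``main obstacle'' you flag: the discrepancy between $\D_{L(q)}$ and the substituted polynomial is exactly this cyclotomic factor, so the correct statement is $\D_L^\e\doteq(\tau_q-1)\cdot\D_{L(q)}$ rather than the bare $\doteq$ you first wrote (a correction you yourself anticipate). The homological route makes the extra factor and its source transparent without matrix bookkeeping, and it also handles the vanishing case of $\D_L$ uniformly; your Fox-calculus route should work as well, but it requires carrying out the row-deletion of Lemma~\ref{redundant} and computing the surgery row explicitly in the nonzero-linking setting. For the analytic step the paper simply cites Boyd--Lawton (via \cite{lawton} and Corollary~3.2 of \cite{swJLM}), whereas you sketch the underlying equidistribution argument directly; these are equivalent.
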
 

In the case that $\ell_{d+1}$ has zero linking number with all other components of $L$, the Mahler measures of $\D_{L(q)}$ can increase without bound. However, $(1/q)M(\D_{L(q)})$ converges. In fact, the polynomials $(1/q)\D_{L(q)}$ converge in a strong sense, as we argued combinatorially in \cite{swJLM}. In the next section, we examine this case more closely, from a topological perspective. 

%%%%%%%%%%%%%%

\section{Twisting about an unlinked component.}

Alexander polynomials are defined for any finitely presented group $G$ and epimorphism $\e: G \to \Z^d$, $d \ge 1$. We regard $\Z^d$ as a multiplicative free abelian group with generators $t_1, \ldots, t_d$. One considers the kernel $K$ of $\e$. Its abelianization $K^{ab}$ is a finitely generated $\Rd$-module, where $\Rd = \Z[t_1^{\pm 1}, \ldots, t_d^{\pm 1}]$ is the ring of Laurent polynomials in $t_1, \ldots, t_d$. As $\Rd$ is a Noetherian unique factorization domain, a sequence of elementary ideals $E_k(K^{ab})$ is defined for $k \ge 0$.
The greatest common divisor of the elements of $E_k(K^{ab})$ is the $k$th {\it Alexander polynomial} of $(G, \e)$, denoted here by $\D^\e_{G, k}$. These polynomials, which are defined up to multiplication by units of $\Rd$,  form a sequence of successive divisors. The 0th polynomial is called the {\it Alexander polynomial} of $(G, \e)$ and we denote it more simply by $\D^\e_G$. 

For purposes of computation, one considers a presentation of $G$: 
\begin{equation} \label{presentation} \<x_0, x_1 \ldots, x_n \mid r_1, \ldots, r_m\>.\end{equation} With no loss of generality, we assume that $n \le m$ and $\e(x_0)=t_1$. 

The epimorphism pulls back to the free group $F$ generated
by $x_0, \ldots, x_n$, inducing a unique extension to a map of group rings $\Z[F] \to \Z[\Z^d]$ that, by abuse of notation, we denote also by $\e$.
One forms an {\it Alexander matrix} of the presentation: 
\begin{equation} \label{Alex.matrix1} A^\e =\bigg[\e \bigg({{\partial r_i}\over {\partial x_j}}\bigg)\bigg]_{1\le i, j \le n}, \end{equation} where ${\partial r_i}/{\partial x_j}$ are Fox partial derivatives (see, for example, \cite{kaw}). 
Then $\D_{G,k}^\e$ is equal to the greatest common divisor of the $(n-k)$-minors of $A^\e$, provided that $d=1$; when $d>1$, the result is divisible by $t_1-1$, and we must divide by it. 

Pairs $(G, \e)$ as above are objects of a category, morphisms being homomorphims $f: G \to G'$ such that $\e' \circ f = \e$. If $f: (G, \e) \to (G', \e')$ is a morphism, then $\D_{G'}^{\e'}$ divides $\D_G^\e$. In particular, $\D_{G'}^{\e'}$ is equal to $\D_G^\e$ up a unit factor of $\Rd$ (denoted $\D_{G'}^{\e'} \doteq \D_G^\e$) whenever $f$ is an isomorphism.

When $G$ is the group of an oriented link $L\subset \Sigma$ of $d$-components in a homology 3-sphere, there is a natural augmentation $\e: G \to \Z^d$ that maps the meridianal generators of the $i$th component to $t_i$, for $1 \le i \le d$. The 
Alexander polynomial $\D^\e_G$, an invariant of the link, is denoted here by $\D_L$. 

Assume that $L = \ell_1 \cup \cdots \cup \ell_{d+1}$ is an oriented 
$(d+1)$-component non-split link in the 3-sphere $\S^3$ such that $\ell_{d+1}$ has zero linking number with $\ell_1,\dots, \ell_d$. Let
\begin{equation} \label{Wirt} \< x_0, x_1, \ldots, x_n \mid r_1, \ldots, r_n\> \end{equation} 
be a Wirtinger presentation for $\pi_L = \pi_1(\S^3 \setminus L)$. {\sl We will assume throughout, without loss of generality,  that $x_n$ corresponds to a meridian of $\ell_{d+1}$.}  

Note that we have omitted a Wirtinger relation in the presentation (\ref{Wirt}). It is well known that any single Wirtinger relation is a consequence of the remaining ones. {\sl We assume throughout, again without loss of generality, that the omitted relation does not involve meridianal generators of $\ell_{d+1}$. }

%Let $\e \in {\rm Hom}(\pi_L, \Z^{d+1})\cong H^1(\S^3 \setminus L; \Z)$ be the augmentation that maps any meridianal generator of $\ell_i$ to $t_i$ if $i \le d$, while mapping generators of $\ell_{d+1}$ trivially. 
%Consider the associated Alexander matrix $A^\e$. 

The longitude of any component of $L$ is a simple closed curve in the torus boundary of a neighborhood of the component that is null-homologous in the link complement. Up to conjugacy, the longtitude represents an element of $\pi_L$, and a  representative word in the Wirtinger presentation associated to a diagram  can be read by tracing around the component of the link, recording generators or their inverses as we pass under arcs, and finally multiplying by an appropriate power of the generator corresponding to the arc where we began so that the element represented has trivial abelianization. 

\begin{lemma} \label{redundant}  Let $\l\in \pi_L$ be the homotopy class of the longitude of $\ell_{d+1}$. For any integer $q$, one of the relators 
$r_1, \ldots, r_n$ in the quotient group presentation 

\begin{equation} \label{surgery.presentation} \< x_0, x_1, \ldots, x_n \mid r_1, \ldots, r_n, \l^qx_n\>  \end{equation} is redundant.

\end{lemma}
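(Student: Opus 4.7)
The plan is to exhibit one of the Wirtinger relations at an undercrossing of $\ell_{d+1}$ that becomes a consequence of the remaining defining relators together with $\l^q x_n$.

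I would begin by labeling the arcs of $\ell_{d+1}$ cyclically as $\alpha_0 = x_n, \alpha_1, \ldots, \alpha_{k-1}$, with $\alpha_k = \alpha_0$, and writing the Wirtinger relation at the $i$th undercrossing of $\ell_{d+1}$ as
$$\rho_i:\ \alpha_{i+1} = \beta_i^{\epsilon_i}\, \alpha_i\, \beta_i^{-\epsilon_i}, \qquad i = 0, \ldots, k-1,$$
where $\beta_i$ is the meridian of the overcrossing arc and $\epsilon_i \in \{\pm 1\}$ records the sign of the crossing. The standing hypothesis that the omitted Wirtinger relation involves no meridian of $\ell_{d+1}$ guarantees that each $\rho_i$ appears among the retained relators $r_1, \ldots, r_n$.

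The next step is to iterate $\rho_0, \ldots, \rho_{k-2}$ to express $\alpha_{k-1}$ as a single conjugate $V\, x_n\, V^{-1}$, where $V = \beta_{k-2}^{\epsilon_{k-2}} \cdots \beta_0^{\epsilon_0}$. Substituting this into $\rho_{k-1}$ reduces that relation, modulo $\rho_0, \ldots, \rho_{k-2}$, to a commutator identity $[U, x_n] = 1$, with $U = \beta_{k-1}^{\epsilon_{k-1}} V$. A short comparison then identifies $U$ with the canonical longitude $\l$ up to a power of $x_n$---namely the self-writhe correction that makes the tracing word null-homologous in $\pi_L^{ab}$. Hence the reduced form of $\rho_{k-1}$ is $[\l, x_n] = 1$.

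Finally, the new relator $\l^q x_n$ forces $x_n = \l^{-q}$, so $x_n$ is a power of $\l$ and automatically commutes with it; the commutator $[\l, x_n]$ is therefore trivial in the quotient, and $\rho_{k-1}$ is redundant in~(\ref{surgery.presentation}). I expect the principal obstacle to lie in the sign bookkeeping: tracking the $\epsilon_i$ through the iteration, and verifying that the accumulated word $U$ differs from $\l$ only by the expected power of $x_n$. This is the step where the vanishing of the linking numbers of $\ell_{d+1}$ with the other components of $L$ is used, since it ensures that only a power of $x_n$---and no other meridianal generator---is needed to null-homologize the naive tracing product.
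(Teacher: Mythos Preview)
Your argument is correct and follows essentially the same route as the paper's own proof: both iterate the Wirtinger relations along the arcs of $\ell_{d+1}$ to reduce the last one to the commutator relation $[\l, x_n]=1$, and then observe that the surgery relator $\l^q x_n$ forces $x_n=\l^{-q}$, trivializing that commutator.

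One small remark on your final paragraph: the ``principal obstacle'' you anticipate is milder than you suggest. Since $[U x_n^{k}, x_n] = [U, x_n]$ for any integer $k$, the identification of $U$ with $\l$ up to a power of $x_n$ is all that is ever needed, and no delicate sign bookkeeping is actually required. In particular the zero-linking-number hypothesis plays no role in this lemma: the Seifert-framed longitude always differs from the naive tracing word by a power of $x_n$ alone (the self-writhe correction), regardless of the linking numbers of $\ell_{d+1}$ with the other components. The paper's proof accordingly makes no appeal to that hypothesis.
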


%Some row of $A^\e$ is a linear combination of other rows with coefficients that are units in $\Rd$. \end{lemma}

\begin{proof} For the purpose of the proof, relabel the meridianal generators of $\ell_{d+1}$ by $w_1 (=x_n), \ldots, w_m$ such that $w_{j+1}$ follows $w_j$ as we travel along $\ell_{d+1}$ in its preferred direction  (subscripts $i$ regarded modulo $m$). Some arc of the link diagram passes between $w_j$ and $w_{j+1}$. The Wirtinger relations at such crossings allow us to express $w_2$ as a conjugate of $x_n$, and then $w_3$ as a conjugate of $x_n$, and so forth. The final relator, which expresses $x_n$ as a conjugate of itself, is $x_n \l x_n^{-1} \l^{-1}$. Substituting $x_n = \l^{-q}$  trivializes the relator, and hence it is redundant in the presence of the other relators.
\end{proof}

We denote by $L_0$ the oriented $d$-component sublink $\ell_1 \cup \cdots \cup \ell_d \subset L$. We denote by $X_{L_0}$ the exterior of $L_0$. Performing $1/q$-surgery on $\ell_{d+1}\subset \S^3$ yields a homology sphere $\Si(q)$. We regard $\ell_1 \cup \cdots \cup \ell_d$ as an oriented link $L(q) \subset \Si(q)$. When $\ell_{d+1}$ is unknotted, $\S(q) = \S^3$, and $L(q)$ is the link obtained from $L_0$ by giving $q$ full twists to those strands passing through a disk with boundary $\ell_{d+1}$. It is clear that (\ref{surgery.presentation}) is a presentation of $\pi_{L(q)}= \pi_1(\Si(q)\setminus L(q))$. 
An Alexander matrix $A^\e$ for $L(q)$ results from the Alexander matrix $A^\e$ for $L$ by adjoining a row consisting of the Fox partial derivatives of the added relation $\l^qx_n$. 
By adding nugatory crossings to $\ell_{d+1}$, we can arrange that the word representing the longitude $\lambda$ does not contain $x_n$ or $x_n^{-1}$.  Then in the new row we see $1$ in the column corresponding to $x_n$. In other columns we see an element of $\Rd$  multiplied by the integer $q$. The reason is the following. Assume that $x_{i_1}^{e_1}\cdots x_{i_m}^{e_m}$ represents $\l$, where each $e_j= 1$ or $-1$. 
For any $j = 1, \ldots, m$, the generator $x_{i_j}^{e_j}$ occurs $q$ times, and it contributes
$\e(x_{i_1}^{e_1}\ldots x_{i_{j-1}}^{e_{j-1}})(1+ \e(\l)+\ldots + \e(\l^{j-1}))= q \cdot \e(x_{i_1}^{e_1}\ldots x_{i_{j-1}}^{e^{j-1}})$, if $e_j = 1$; it is equal to $-\e(x_{i_1}^{e_1}\ldots x_{i_j}^{e_j})(1+ \e(\l)+\ldots + \e(\l^{j-1})) = -q\cdot \e(x_{i_1}^{e_1}\ldots x_{i_j}^{e_j})$, if $e_j = -1$. Here we use the fact that $\e(\l)=1$ since $\ell_{d+1}$ has zero linking number with $\ell_1, \ldots, \ell_d$.

Lemma \ref{redundant} allows us to discard some row of $A(q)$ other than the last without affecting the module presented.  Let $A^\e_\flat(q)$ be the resulting square matrix. The Alexander polynomial $\D_{L(q)}$ is the determinant of $A^\e_\flat(q)$, which we can write as a Laurent polynomial $P(t_1, \ldots, t_d, q)$. Note that $q$ appears linearly in $P$. 

The limit  $\lim_{q \to \infty} (1/q) P(t_1, \ldots, t_d, q)$ is the same as the determinant of the matrix obtained from $A^\e_\flat(q)$ by modifying the last row: setting $q$ equal to 1 and replacing  with $0$ the coefficient $1$ in the column corresponding to $x_n$. The modified row agrees with the Fox partial derivatives of the relation $\l$=1, the relation that arises from 0-framed surgery on $\ell_{d+1}$.

The determinant of the modified matrix is the Alexander polynomial 
$\D^\e_{\pi_1 M}$, where $M$ is obtained from the exterior $X_{L_0}$ via 
0-framed surgery on $\ell_{d+1}$, and $\e: \pi_1M\to \Z^d$ is induced by the augmentation of $\pi_L$ that is standard on $\pi_{L_0}$ but maps the class of $\ell_{d+1}$ trivially.

We have proved: 

\begin{theorem}\label{zerotwist} Assume that $L = \ell_1 \cup \cdots \cup \ell_{d+1}$ is an oriented $(d+1)$-component link in the 3-sphere $\S^3$ such that $\ell_{d+1}$ has zero linking number with each component of the sublink $L_0=\ell_1\cup \cdots \cup \ell_d$. Then there exists a polynomial $P(t_1, \ldots, t_d, q)$  with the following properties: 
\item{} (1) $P=P_0+qP_1$ with $P_0, P_1 \in\Z[t_1, \ldots, t_d]$; 
\item{} (2) For any positive integer $q$, $P(t_1, \ldots, t_d, q) \doteq \D_{L(q)}(t_1, \ldots, t_d)$; 
\item{} (3) $\lim_{q \to \infty} (1/q)P(t_1, \ldots, t_d, q)=P_1(t_1, \ldots, t_d)\doteq \D^\e_{\pi_1 M}(t_1, \ldots, t_d)$, 

\ni where $M$ is the 3-manifold obtained from the exterior of $L_0$ by 0-framed surgery on $\ell_{d+1}$, and $\e: \pi_1 M \to \Z^d$ is induced by the augmentation of $\pi_L$ that is standard on $\pi_{L_0}$ but maps a meridian of $\ell_{d+1}$ trivially.
\end{theorem}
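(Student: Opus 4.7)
The plan is to combine the Wirtinger presentation (\ref{Wirt}) with Lemma \ref{redundant} to build an Alexander matrix for $L(q)$ in which the dependence on $q$ is transparent, and then to identify the $q \to \infty$ limit with a standard Alexander matrix for the $0$-framed surgery manifold $M$. First I would take the presentation (\ref{surgery.presentation}) of $\pi_{L(q)}$ and write the associated Alexander matrix $A^\e(q)$ as the original Alexander matrix $A^\e$ for $L$ augmented by one extra row whose entries are $\e(\partial(\l^q x_n)/\partial x_j)$. After inserting nugatory crossings if necessary so that the longitude word $\l$ does not involve $x_n$, I would verify that in this new row the column corresponding to $x_n$ contributes $1$ and every other column contributes $q$ times an element of $\Rd$. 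This is precisely the Fox calculus computation sketched in the paragraph preceding the theorem; the crucial input is $\e(\l)=1$, which holds because $\ell_{d+1}$ has zero linking number with $\ell_1,\ldots,\ell_d$, and which forces each telescoping geometric series $1 + \e(\l) + \cdots + \e(\l^{q-1})$ to collapse to the integer $q$.

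Next I would invoke Lemma \ref{redundant} to discard some row of $A^\e(q)$ other than the last without changing the presented module, producing a square matrix $A^\e_\flat(q)$ whose determinant equals $\D_{L(q)}$ up to a unit of $\Rd$ (with the standard division by $t_1-1$ absorbed when $d>1$). Since $q$ appears only in the last row and only linearly there, cofactor expansion along that row yields
$$\det A^\e_\flat(q) = P_0(t_1, \ldots, t_d) + q\,P_1(t_1, \ldots, t_d)$$
for fixed $P_0, P_1 \in \Z[t_1, \ldots, t_d]$. Taking $P$ to be this determinant immediately establishes parts (1) and (2).

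For part (3), dividing by $q$ and letting $q \to \infty$ isolates $P_1$, which equals the determinant of the matrix obtained from $A^\e_\flat(q)$ by setting $q=1$ and zeroing the entry in the $x_n$-column of the last row. That modified last row is exactly the vector of Fox derivatives of the longitude word $\l$, which is the defining relator for $0$-framed surgery on $\ell_{d+1}$. Thus the modified matrix is an Alexander matrix for a presentation of $\pi_1 M$, with the augmentation $\e: \pi_1 M \to \Z^d$ well defined because $\e(\l)=1$ puts $\l$ already in the kernel of the augmentation of $\pi_L$; its determinant is therefore $\D^\e_{\pi_1 M}$ up to a unit. The step I expect to be the main obstacle is the bookkeeping in the Fox calculus computation of the last row: one must track the sign cases $e_j = \pm 1$ cleanly and confirm that the nugatory-crossing modification alters neither $\pi_L$ nor its Alexander matrix nor the result of $1/q$-surgery on $\ell_{d+1}$. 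Once that is in place, the remainder is linear algebra together with the standard dictionary between Fox-derivative matrices and presentation matrices of Alexander modules.
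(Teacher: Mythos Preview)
Your proposal is correct and follows essentially the same approach as the paper: form the Alexander matrix for $L(q)$ by adjoining the Fox-derivative row of $\lambda^q x_n$, use nugatory crossings and $\e(\lambda)=1$ to see that this row is $(q\cdot\text{row for }\lambda)$ plus a $1$ in the $x_n$-column, delete a redundant row via Lemma~\ref{redundant} to obtain a square matrix whose determinant is linear in $q$, and identify the leading coefficient $P_1$ with the determinant of the corresponding Alexander matrix for the $0$-framed surgery presentation. The only point you make explicit that the paper leaves implicit is the division by $t_1-1$ when $d>1$, which is harmless since the same factor is removed on both sides of the identification in part~(3).
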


\begin{remark} The polynomial $P(t_1, \ldots, t_d, q)$ provides a uniform normalization of the Alexander polynomials $\D_{L(q)}$. Using it, we can speak of the coefficients of $\D_{L(q)}$. \end{remark}

%%%%%%%%%%%%%%%%%%%%%%%%%%

\begin{example} \label{borromean} Consider the Borromean rings, the oriented 3-component link in Figure 1 with Wirtinger generators indicated. The link $L(q)$ appears in Figure 2. 

The homotopy class $\l$ of the longitude of $\ell_{3}$ is $x_0x_4^{-1}$. Using Wirtinger relations, it can be written as $x_0 x_2 x_0^{-1} x_2^{-1}$. The Alexander matrix $A^\e$ for $L(q)$, with the first 5 rows corresponding to relations at numbered crossings, is 
$$\begin{pmatrix} -1 & 1-t_1& 0 & 0 & 0\\
                               0 & 0 & 0 & -1 & t_1 \\
                               0 & 1 & -1 & 1-t_2 & 0\\
                               0 & 0 & 0 & -1 &t_1\\
                               0 & 1 &-1 &0 &1-t_2\\
                               -q &0 &0 &0 & 1 \end{pmatrix}.$$ The Alexander polynomial $\D_{L(q)}(t_1, t_2)$ is equal to $q(t_1-1)(t_2-1)$. 
                               
The proof of Lemma \ref{redundant} shows that the second row can be deleted without affecting elementary ideals. Then as in the proof of Theorem \ref{zerotwist}, replacing the coefficient in the lowest right-hand entry with $0$ and setting $q=1$ produces a matrix 
$$\begin{pmatrix} -1 & 1-t_1& 0 & 0 & 0\\
                               0 & 1 & -1 & 1-t_2 & 0\\
                               0 & 0 & 0 & -1 &t_1\\
                               0 & 1 &-1 &0 &1-t_2\\
                               -1 &0 &0 &0 & 0 \end{pmatrix}.\quad \quad (*)$$
The determinant of the matrix (*) is equal to 
$(t_1-1)(t_2-1)$. The limit $\lim_{q\to \infty}\D_{L(q)}$ is $t_2-1$.

\end{example}

\begin{figure}
\begin{center}
\includegraphics[height=4.5 in]{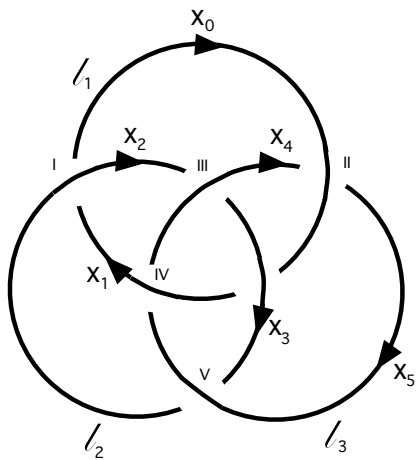}
\caption{Borromean rings}
\label{Borromean rings}
\end{center}
\end{figure}

\begin{figure}
\begin{center}
\includegraphics[height=1.5 in]{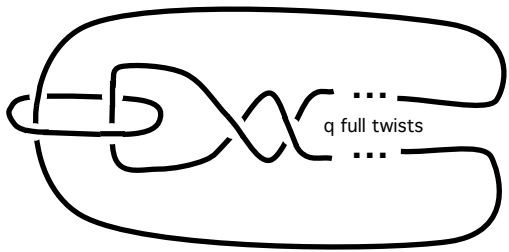}
\caption{Link $L(q)$}
\label{$L(q)$}
\end{center}
\end{figure}

%%%%%%%%%%%%%%%%%%%%%%%%%%%%%

The matrix (*) computed in Example \ref{borromean} can be computed for any link for which Theorem \ref{zerotwist} applies. The limit in statement (3) will vanish if and only if the matrix is singular. Equivalently,  the limit vanishes if and only if $P=P_0$, that is,
the polynomials $\D_{L(q)}$ do not depend on $q$. 

\begin{cor} \label{twist.cor} Assume the hypotheses of Theorem \ref{zerotwist}. The limit in statement (3) vanishes if and only if the sequence of polynomials $\D_{L(q)}$ is constant. \end{cor}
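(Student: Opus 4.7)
The plan is to use Theorem \ref{zerotwist} as the main input. Writing $P = P_0 + qP_1$ as in its part (1), part (3) identifies the limit $\lim_{q\to\infty}(1/q)P$ with $P_1$ up to units, so the limit vanishes precisely when $P_1 = 0$ in $\Rd$. The corollary thus reduces to the equivalence: $P_1 = 0$ if and only if the sequence $\D_{L(q)}$ is constant modulo units of $\Rd$.

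One direction is immediate. If $P_1 = 0$, then by part (1), $P(t_1,\ldots,t_d,q) = P_0(t_1,\ldots,t_d)$ does not depend on $q$, so part (2) gives $\D_{L(q)} \doteq P_0$ for every positive integer $q$.

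For the converse I would argue contrapositively. Suppose $P_1 \ne 0$ and fix a monomial $t^\gamma$ in its support with nonzero coefficient $c$; let $a$ denote the coefficient of $t^\gamma$ in $P_0$. The coefficient of $t^\gamma$ in $P_0 + qP_1$ is then $a + qc$, whose absolute value tends to infinity. Since the units of $\Rd = \Z[t_1^{\pm 1}, \ldots, t_d^{\pm 1}]$ are precisely the monomials $\pm t_1^{a_1} \cdots t_d^{a_d}$, unit multiplication only permutes coefficients up to sign and hence preserves their maximum absolute value. If the Laurent polynomials $P_0 + qP_1$ were all mutual associates in $\Rd$ (including the possibility that they all vanish), this maximum would be independent of $q$, contradicting $|a+qc| \to \infty$. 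By part (2), the sequence $\D_{L(q)}$ is then not constant modulo units.

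The only genuine care required is interpreting "constant" in $\Rd$, where Alexander polynomials are defined only up to units; identifying the maximum absolute value of coefficients as a unit-invariant reduces the entire converse to a one-line growth estimate, and I do not expect any substantive obstacle beyond this bookkeeping.
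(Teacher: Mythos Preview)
Your argument is correct and follows the same line as the paper, which simply observes (in the paragraph preceding the corollary, without a formal proof environment) that the limit vanishes iff $P=P_0$, i.e., iff the $\D_{L(q)}$ do not depend on $q$. You are in fact more careful than the paper on one point: the paper tacitly identifies ``constant'' with ``$P$ independent of $q$'' and does not spell out why $P_1\ne 0$ rules out the $P_0+qP_1$ being mutual associates in $\Rd$; your maximum-absolute-coefficient invariant closes that small gap cleanly.
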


In the remainder of the section, we characterize homologically the case in which the limit of Theorem \ref{zerotwist} (3) vanishes.   

Let $X_L^\e$ denote the $\Z^d$-cover of the exterior of $L$ associated to the augmentation $\e$ in Theorem \ref{zerotwist}. The longitude of $\ell_{d+1}$ lifts to a closed oriented curve in $X_L^\e$; we fix a lift and regard it as an element of the $\Rd$-module $H_1 X_L^\e$. By abuse of notation, we let $\l$ denote this class.

The Alexander polynomial $\D^\e_L$ is equal to $\D_L(t_1,\ldots, t_d, 1)$ if $d=1$; it is equal to $\D_L(t_1,\ldots, t_d, 1) (t_1-1)$ if $d>1$ (see Proposition 7.3.10 of  \cite{kaw}, for example). If $\ell_{d+1}$ has zero linking number with $\ell_1, \ldots, \ell_d$, then the Torres conditions \cite{kaw} imply that $\D^\e_L =0$. Hence $\rk H_1 X_L^\e\ge 1$. We recall that the {\it rank} of an $\Rd$-module $H$, denoted by $\rk H$,  is the dimension of $H \otimes \Q(\Rd)$ regarded as a $\Q(\Rd)$-vector space, where $\Q(\Rd)$ is the field of fractions of $\Rd$. 

\begin{theorem} \label{characterize} Assume that $L = \ell_1 \cup \cdots \cup \ell_{d+1}$ is an oriented $(d+1)$-component link in the 3-sphere $\S^3$ such that $\ell_{d+1}$ has zero linking number with each component of $L_0=\ell_1\cup \cdots \cup \ell_d$. The following statements are equivalent.
\item{} (1) the sequence of polynomials $\D_{L(q)}$ is constant; 
\item{} (2) $\rk H_1 X_L^\e  > 1$ or
 $\l$ is a torsion element of $\in H_1 X_L^\e$. 
%\item{} (3) $\l \otimes 1$ generates $H_1 X^\e \otimes \Q(\Rd)$. 
\end{theorem}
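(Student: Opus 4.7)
The plan is to use Corollary \ref{twist.cor} to reformulate statement (1) as the vanishing of $\D^\e_{\pi_1 M}$, and then to compute the Alexander module $H_1 \tilde M$ of $(\pi_1 M, \e)$ in terms of $H_1 X_L^\e$ and the class $\l$. Recall from the discussion preceding the statement that $\rk H_1 X_L^\e \ge 1$.

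First I would realize $M$ as $X_L \cup_{\partial N(\ell_{d+1})} (D^2 \times S^1)$, where the meridian of the attached solid torus is glued to $\l$. Because both the meridian of $\ell_{d+1}$ and $\l$ lie in $\ker\e$, the preimage of $\partial N(\ell_{d+1})$ in the $\Z^d$-cover $\tilde M$ is a disjoint union of $\Z^d$ copies of $T^2$, and the preimage of the surgery solid torus splits into $\Z^d$ copies of $D^2 \times S^1$. Using excision on the pair $(\tilde M, X_L^\e)$, together with the computation $H_2(D^2 \times S^1, T^2) \cong \Z$ (generated by a meridian disk whose boundary is $\l$) and $H_1(D^2 \times S^1, T^2) = 0$, the long exact sequence of the pair takes the $\Rd$-equivariant form
$$\Rd \xrightarrow{\cdot \l} H_1(X_L^\e) \to H_1(\tilde M) \to 0,$$
since the generator of the $0$-copy of the relative $H_2$ bounds the $0$-lift of $\l$ in $X_L^\e$ and the $\Z^d$-action simply permutes copies. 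Hence $H_1(\tilde M) \cong H_1(X_L^\e)/\Rd\cdot\l$.

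For the final step, additivity of $\Rd$-rank in the short exact sequence $0 \to \Rd\cdot\l \to H_1(X_L^\e) \to H_1(\tilde M) \to 0$ gives $\rk H_1(X_L^\e) = \rk(\Rd\cdot\l) + \rk H_1(\tilde M)$. Since $\Rd\cdot\l$ is cyclic, its rank is $0$ if $\l$ is torsion and $1$ otherwise. Thus $\rk H_1(\tilde M) > 0$ if and only if either $\l$ is torsion or $\rk H_1(X_L^\e) > 1$. On the other hand, $\D^\e_{\pi_1 M}$ is the gcd of the generators of the $0$th elementary ideal $E_0(H_1\tilde M)$ (up to the correction by $t_1-1$ when $d > 1$), and vanishes precisely when $E_0(H_1\tilde M) = 0$, which happens exactly when $\rk_{\Rd} H_1\tilde M > 0$. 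Combined with Corollary \ref{twist.cor}, this chain of equivalences gives (1) $\Leftrightarrow$ (2).

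The step I expect to be most delicate is verifying that the connecting homomorphism from the relative $H_2$ to $H_1(X_L^\e)$ really is multiplication by $\l$ as an $\Rd$-module map; after that, the argument reduces to rank additivity over the domain $\Rd$ and the standard dictionary between vanishing of Alexander polynomials and positive rank of Alexander modules.
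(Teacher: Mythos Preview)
Your proof is correct and follows essentially the same route as the paper's: both reduce (1) via Corollary~\ref{twist.cor} to the vanishing of $\D^\e_{\pi_1 M}$, identify $H_1\tilde M$ with $H_1 X_L^\e/\langle\l\rangle$, and then pass to ranks (the paper does this by tensoring with $\Q(\Rd)$, you by rank additivity, which is the same thing). Your excision argument for the quotient identification is spelled out in more detail than the paper's one-line assertion, but the underlying approach is identical.
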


\begin{proof} 

Recall that $M$ is the 3-manifold obtained from $X_{L_0}$ by 0-framed surgery on $\ell_{d+1}$. Let $M^\e$ be the $\Z^d$-cover induced by $
\e$.  The homology $H_1 M^\e$ is the quotient of $H_1 X_L^\e$ by the submodule $\<\l\>$ generated by $\l$. 
By Theorem \ref{zerotwist} (3) and Corollary \ref{twist.cor}, the sequence of polynomials $\D_{L(q)}$ is constant if and only if 
$\rk H_1 M^\e > 0$. Hence statement (1)  is equivalent to the assertion that $H_1 M^\e \otimes \Q(\Rd) \cong  H_1X_L^\e/\<\l\>\otimes \Q(\Rd)$ is nontrivial. Since the latter module is isomorphic to $H_1 X_L^\e \otimes \Q(\Rd)/\<\l \otimes 1\>$, statements (1) and (2) are equivalent. 
\end{proof}

%%%%%%%%%%%%%%%%%%%%%%%%%%%%%%%%%

The restriction of $\e$ to $\pi_{L_0}$, which we also denote by $\e$, is the standard abelianization. When the Alexander polynomial of $L_0$ is nontrivial, the conclusion of Theorem \ref{characterize} simplifies.

\begin{cor} \label{torsionelement} Assume in addition to the hypotheses of Theorem \ref{characterize} that $\D_{L_0}\ne0$. Then
the sequence of polynomials $\D_{L(q)}$ is constant if and only if $\l$ is a torsion element of $H_1 X_L^\e$. \end{cor}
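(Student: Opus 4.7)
The plan is to reduce to Theorem \ref{characterize} by showing that the extra hypothesis $\D_{L_0}\ne 0$ forces $\rk_{\Rd} H_1 X_L^\e \le 1$. Once this bound is established, the first disjunct of condition (2) of Theorem \ref{characterize}, namely ``$\rk H_1 X_L^\e > 1$'', is vacuous, and the characterization collapses to exactly the desired equivalence ``constant iff $\l$ torsion.''

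The key step will be to compare $H_1 X_L^\e$ with $H_1 X_{L_0}^\e$. I would first observe that $\e:\pi_L\to \Z^d$ factors through $\pi_{L_0}$: inclusion $X_L\hookrightarrow X_{L_0}$ kills $m_{d+1}$ (it bounds a meridian disk in the solid torus $N=N(\ell_{d+1})$ added to $X_L$ to recover $X_{L_0}$), and post-composing with the standard abelianization of $\pi_{L_0}$ yields $\e$. Consequently $X_L^\e$ is the preimage of $X_L$ inside $X_{L_0}^\e$. Moreover both $m_{d+1}$ and $\l_{d+1}$ lie in $\ker\e$ — the first because it is already trivial in $\pi_{L_0}$, the second precisely by the zero-linking hypothesis — so the pair $(N,T)$, with $T=\partial N$, lifts to $X_{L_0}^\e$ as a disjoint union of copies indexed by $\Z^d$.

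I would then apply excision to the long exact sequence of the pair $(X_{L_0}^\e, X_L^\e)$, identifying the relative homology with $H_*(N,T)\otimes_{\Z}\Rd$. A direct computation of the solid-torus pair gives $H_1(N,T)=0$ and $H_2(N,T)\cong\Z$, generated by the meridian disk of $\ell_{d+1}$, so the relevant portion of the LES reads
\[
\Rd \longrightarrow H_1 X_L^\e \longrightarrow H_1 X_{L_0}^\e \longrightarrow 0.
\]
Taking ranks over $\Q(\Rd)$ gives $\rk_{\Rd} H_1 X_L^\e \le 1+\rk_{\Rd} H_1 X_{L_0}^\e$. Now $\D_{L_0}\ne 0$ is equivalent, via the Fox calculus computation already used in the paper, to $H_1 X_{L_0}^\e$ being an $\Rd$-torsion module (the $n\times n$ Alexander matrix has nonzero determinant, namely $(t_1-1)\D_{L_0}$ for $d\ge 2$ and $\D_{L_0}$ for $d=1$). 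Hence $\rk_{\Rd} H_1 X_{L_0}^\e = 0$ and the desired bound $\rk_{\Rd} H_1 X_L^\e \le 1$ follows.

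The main obstacle I anticipate is the excision identification together with the verification that the lifts of $(N,T)$ genuinely split as disjoint unions indexed by $\Z^d$; this is the step where the zero-linking hypothesis is actually used, and the rest of the argument is essentially bookkeeping on the long exact sequence. With that setup in hand, the corollary is then an immediate consequence of Theorem \ref{characterize}.
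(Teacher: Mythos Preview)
Your proposal is correct and follows essentially the same route as the paper: both arguments use the long exact sequence of the pair $(X_{L_0}^\e, X_L^\e)$ together with excision to identify the relative homology with a free $\Rd$-module of rank one, then use $\D_{L_0}\ne 0$ to force $\rk H_1 X_{L_0}^\e = 0$, reducing Theorem~\ref{characterize} to the torsion condition alone. The only cosmetic difference is that the paper invokes the vanishing $H_2 X_{L_0}^\e = 0$ (another consequence of $\D_{L_0}\ne 0$) to upgrade your four-term sequence to a short exact sequence and conclude $\rk H_1 X_L^\e = 1$ exactly, whereas you stop at the inequality $\rk H_1 X_L^\e \le 1$; either version suffices.
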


\begin{proof} The hypothesis $\D_{L_0}\ne 0$ implies that $H_2 X_{L_0}^\e =0$ \cite{kaw}. The long exact sequence of the pair $X_L^\e \subset X_{L_0}^\e$ together with Excision yields a short exact sequence: 

\begin{equation}\label{ses} 0 \to \Rd \to H_1X_L^\e \to H_1X_{L_0}^\e \to 0.\end{equation}

\ni Since $\rk H_1 X_{L_0}^\e=0$, tensoring with $\Q(\Rd)$ shows that $\rk H_1 X_L^\e =1$. Theorem \ref{characterize} completes the proof. 

\end{proof}

%%%%%%%%%%%%%%%%%%%%%%%
\begin{figure}
\begin{center}
\includegraphics[height=2.5 in]{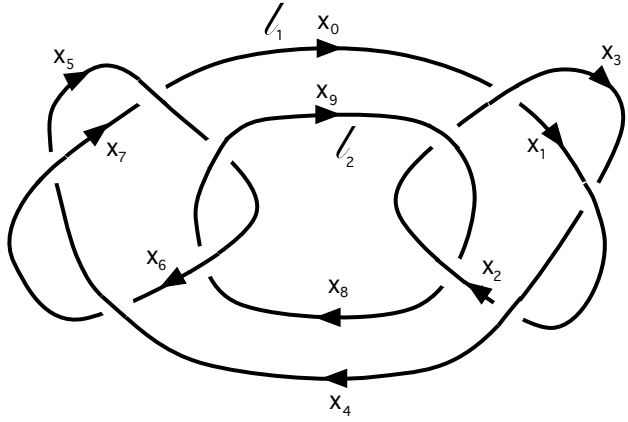}
\caption{Link $L$ with nontrivial torsion element $\l$}
\label{torsion}
\end{center}
\end{figure}
\begin{example} A torsion element of Corollary \ref{torsionelement} need not be trivial in $H_1 X_L^\e$, as we demonstrate. Consider the 2-component link $L$ with labeled Wirtinger generators in Figure \ref{torsion}. A straightforward calculation shows that $H_1 X_L^\e$ has module presentation 
$$\< x_3, x_5, x_9 \mid (t_1-1)x_9= (t_1^3-t_1^2+t_1)x_5, (t_1^2 - t_1+1)(x_5-x_3)\>.$$ The element $\l=x_6x_2^{-1}$ is conjugate in $\pi_L$ to $x_5 x_3^{-1}$ and it represents $x_5 - x_3$ in $H_1 X_L^\e$. Since $H_1 X_L^\e$ is isomorphic to the direct sum  $\< x_5, x_9 \mid (t_1 -1)x_9= (t_1^3-t_1^2+t_1)\l\> \oplus \{\l \mid (t_1^2-t_1+1)\l =0\>$, it is clear that $\l$ is a nontrivial torsion element in $H_1X_L^\e$. \end{example} 

The link in Example \ref{torsion} is a homology boundary link (see \cite{hillman}, p. 23). A link $L$ is a {\it homology boundary link} if there exist mutually disjoint properly embedded orientable surfaces $S_i$ 
in the link exterior $X_L$, corresponding to the components $\ell_i$, such that the boundary of $S_i$ is homologous to the longitude of the $i$th component. Since the linking number of any curve with the $i$th longitude is given by intersection number with $S_i$, each inclusion map $S_i \hookrightarrow X_L$ induces a trivial homomorphism on first homology.  

The following proposition implies that performing $1/q$-surgery on any component of a homology boundary $(d+1)$-component link produces a sequence of $d$-component links having the same Alexander polynomial. 

\begin {prop} \label{boundary.link} Let $L= \ell_1 \cup \cdots \cup \ell_{d+1}$ be an oriented $(d+1)$-component link as in Theorem \ref{characterize} . Assume that there exists a properly embedded orientable surface $S$ in $X_L$ with boundary homologous to the longitude of $\ell_{d+1}$ and such that the inclusion map $\iota: S \hookrightarrow X_L$ induces a trivial homomorphism on first homology. Then  the sequence of polynomials $\D_{L(q)}$ is constant. \end{prop}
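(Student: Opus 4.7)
The plan is to apply Theorem~\ref{characterize}: it suffices to show that $\l$ is a torsion element of $H_1 X_L^\e$.

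Because $\iota_*\colon H_1 S\to H_1 X_L$ is trivial and $\Z^d$ is abelian, the composition $\pi_1 S\to\pi_1 X_L\to\Z^d$ is trivial, so the pullback of $X_L^\e\to X_L$ over $S$ is trivial and $S$ admits a lift $\tilde S\subset X_L^\e$ as a properly embedded surface. Viewed as a $2$-chain, $\tilde S$ satisfies $[\partial\tilde S]=0$ in $H_1 X_L^\e$.

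Next I would analyze which boundary circles of $S$ can occur. For any component $C$ of $\partial S$ on a torus $T_i=\partial N(\ell_i)$, a loop in $S$ parallel to $C$ represents $[C]\in H_1 X_L$, which vanishes by hypothesis. Writing $[C]=c\mu_i+d\l_i$ in $H_1 T_i$ and using $[\l_i]=\sum_{j\ne i}\mathrm{lk}(\ell_i,\ell_j)\mu_j$ in $H_1 X_L$, one finds $c=0$ always, and $d=0$ whenever $\ell_i$ has nontrivial linking with some other component. After capping off any resulting nullhomotopic boundary circles by disks pushed into $X_L$ (which preserves the triviality of $\iota_*$), I may assume $\partial S$ consists only of longitudinal circles on those tori $T_i$ whose component $\ell_i$ has zero linking with all the others. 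On each such $T_i$ the longitude $\l_i$ has $\e(\l_i)=0$ and lifts to a closed loop $\tilde\l_i$ in $X_L^\e$. Collecting the lifts of all boundary components yields in $H_1 X_L^\e$ the identity
\[
Q(t_1,\dots,t_d)\,\l \;+\; \sum_{i\in A} P_i(t_1,\dots,t_d)\,\tilde\l_i \;=\; 0,
\]
where $A\subseteq\{1,\dots,d\}$ indexes the tori on which $\partial S$ has boundary, and $Q(1,\dots,1)=1$ because $\partial S$ is homologous to $\l_{d+1}$ on $T_{d+1}$. In particular $Q$ is a nonzero element of $\Rd$.

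For each $i\in A$ the restricted cover $\tilde T_i$ is the connected annulus $\R\times S^1$, so any two lifts of $\l_i$ are homologous inside $\tilde T_i$; consequently $t_i\cdot\tilde\l_i=\tilde\l_i$ in $H_1 X_L^\e$, i.e.\ $(t_i-1)\tilde\l_i=0$. Multiplying the displayed identity by $\prod_{i\in A}(t_i-1)$ therefore annihilates every term in the sum and leaves
\[
\Bigl(\prod_{i\in A}(t_i-1)\Bigr)\,Q\cdot\l \;=\; 0\quad\text{in }H_1 X_L^\e.
\]
Since $\Rd$ is an integral domain and the coefficient is a product of nonzero elements, $\l$ is a torsion element of $H_1 X_L^\e$, and Theorem~\ref{characterize} yields the desired conclusion. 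The main obstacle I foresee is the careful justification of the reduction step: that nullhomotopic boundary circles on the tori of components with nonzero linking can indeed be capped inside $X_L$ without destroying the triviality of $\iota_*$, and that after this simplification the coefficient $Q$ is truly nonzero—this rests on interpreting ``boundary homologous to the longitude of $\ell_{d+1}$'' in $H_1\partial X_L$ (so the longitudinal coefficient on $T_{d+1}$ equals $1$) rather than in the weaker $H_1 X_L$.
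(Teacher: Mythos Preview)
Your strategy coincides with the paper's: lift $S$ to $X_L^\e$, observe that $[\partial\tilde S]=0$ in $H_1 X_L^\e$ forces $\l$ to be torsion, and then invoke Theorem~\ref{characterize}. The paper's execution is much shorter because it reads the hypothesis ``$\partial S$ homologous to the longitude of $\ell_{d+1}$'' in $H_1(\partial X_L)$, exactly the interpretation you settle on at the end. Under that reading $[\partial S\cap T_i]=0$ in $H_1 T_i$ for \emph{every} $i\le d$, so after capping, \emph{all} boundary circles on $T_1,\dots,T_d$ disappear, not merely those on tori of components with nonzero linking. Hence your set $A$ is empty and the detour through the classes $\tilde\l_i$ and the annihilators $(t_i-1)$ is unnecessary: one obtains directly $[\partial\tilde S]=Q\,\l$ with $Q(1,\dots,1)=1$, which is precisely the paper's ``the boundary of a lift represents $p\l$''. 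Your longer route is not wrong, only redundant; it would be genuinely needed under a weaker boundary hypothesis, but as you yourself observe, ``homologous in $H_1 X_L$'' is vacuous here since $\l_{d+1}$ is already null-homologous.

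One small inaccuracy: the preimage of $T_i$ in the $\Z^d$-cover $X_L^\e$ is not a single annulus but a disjoint union of copies of $\mathbb R\times S^1$, indexed by $\Z^d/\langle t_i\rangle$. This does not damage your argument: each component is an annulus on which $t_i$ acts by translation, so any two lifts of $\l_i$ in the same component that differ by $t_i$ are homologous there, and $(t_i-1)\tilde\l_i=0$ in $H_1 X_L^\e$ follows as claimed. Your worry about capping is harmless as well: gluing a disk along a null-homotopic boundary circle on $T_i$ can only kill generators of $H_1 S$ and adds a contractible piece, so the triviality of $\iota_*$ on $H_1$ is preserved.
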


\begin{proof} Since the image $\iota_*: \pi_1S\to \pi_1X_\ell$ is contained in the commutator subgroup  and since the cover $X_L^\e$ is abelian,  $\iota$ lifts to $X_L^\e$. The boundary of a lift represents $p \l$, for some $p = p(t_1, \ldots, t_d)$ such that $p(1, \ldots, 1) = 1$. Hence $\l$ is a torsion element of $H_1 X_L^\e$. Theorem \ref{zerotwist} and Corollary \ref{twist.cor} complete the proof. \end{proof}

We conclude the section with two examples and a conjecture. \bs

 \begin{example} Consider the oriented 2-component link $L = \ell_1 \cup \ell_2$ in Figure \ref{2twist}. A straightforward calculation shows that $\D_{L(q)} \doteq 2q t_1^2 + (1- 4 q) t_1 + 2 q$. Here $\lim_{q\to \infty} (1/q) \D_{L(q)} \doteq 2(t_1-1)^2$. 

By replacing the 2 full-twists in $\ell_1$ by $r$ full-twists, 
$\D_{L(q)}$ becomes $rq t_1^2 + (1- 2 r q) t_1 + 2 r q$ and 
$\lim_{q\to \infty} (1/q) \D_{L(q)} \doteq r (t_1-1)^2$. 

\end{example}

\begin{figure}
\begin{center}
\includegraphics[height=2 in]{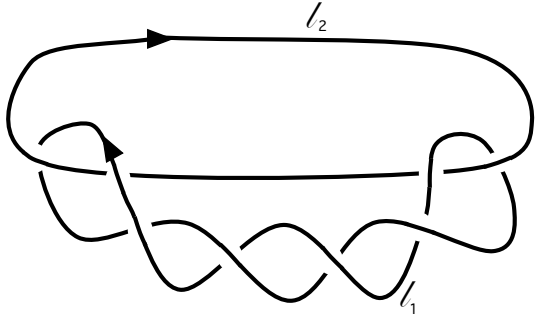}
\caption{$L= \ell_1 \cup \ell_2$ with linking number zero.}
\label{2twist}
\end{center}
\end{figure}

\begin{example} Consider the oriented 3-component link $L = \ell_1 \cup \ell_2 \cup \ell_3$ in Figure \ref{cyclotomic}. The sublink $L_0$ is a Hopf link, and $\ell_3$ has linking number zero with each of its components. A straightforward calculation shows that $\lim_{q\to \infty}(1/q)\D_{L(q)} \doteq  (t_1 t_2 -1)^2 (t_1^2 t_2^2 - t_1 t_2 + 1)$, a product of generalized cyclotomic polynomials.

\begin{figure}
\begin{center}
\includegraphics[height=3.5 in]{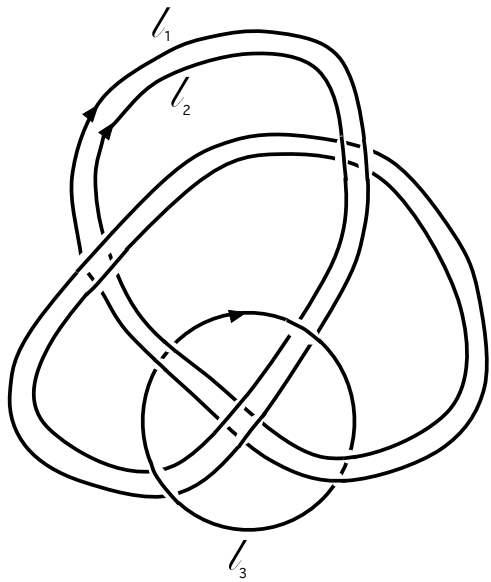}
\caption{Generalized cyclotomic factors arising.}
\label{cyclotomic}
\end{center}
\end{figure}

\end{example} 
 
\begin{conj} For any oriented link as in Theorem \ref{zerotwist}, 
$\lim_{q \to \infty} (1/q)\D_{L(q)}$ is an integer multiple of a product of generalized cyclotomic polynomials. 
\end{conj}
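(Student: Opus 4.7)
By Theorem \ref{zerotwist}(3), the limit $\lim_{q\to\infty}(1/q)\D_{L(q)}$ equals $\D^\e_{\pi_1 M}$, up to a unit of $\Rd$, where $M$ is obtained from $X_{L_0}$ by $0$-framed surgery along $\ell_{d+1}$. By Remark 2(3) the conjecture is then equivalent to the assertion that $M(\D^\e_{\pi_1 M})\in\Z_{\ge 0}$: the integer would be the claimed multiplicative constant, and each non-unit factor of a generalized cyclotomic product has Mahler measure $1$. So the plan is to establish this integrality.

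The algebraic line of attack uses the identification $H_1 M^\e \cong H_1 X_L^\e / \langle \l\rangle$ from the proof of Theorem \ref{characterize}, combined, under the hypothesis $\D_{L_0}\ne 0$, with the short exact sequence
\begin{equation*}
0 \to \Rd \to H_1 X_L^\e \to H_1 X_{L_0}^\e \to 0
\end{equation*}
from the proof of Corollary \ref{torsionelement}. When $\D_{L_0}\doteq 1$ this collapses to $H_1 M^\e \cong \Rd/\langle\l\rangle$, so $\D^\e_{\pi_1 M}\doteq \l$ as an element of $\Rd$, and the conjecture reduces to showing that the homology class in $X_L^\e$ of the longitude of $\ell_{d+1}$ is an integer times a generalized cyclotomic polynomial. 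In general one must further control the ``longitudinal factor'' arising from the image of $\l$ in $H_1 X_{L_0}^\e$ and combine it with the contribution of $\D_{L_0}$ itself, whose Mahler-measure behaviour is well understood via Theorem \ref{growth}.

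A complementary dynamical strategy is available. Earlier work of the authors, together with Lind--Schmidt--Ward, interprets $\log M(\D^\e_{\pi_1 M})$ as the topological entropy of the natural $\Z^d$-shift action on the Pontryagin dual of $H_1 M^\e$. Integer entropy corresponds, up to zero-entropy extensions, to factor maps onto full shifts on finite alphabets. One would try to construct such a map from the closed embedded surface $\hat F \subset M$ obtained by capping a Seifert surface of $\ell_{d+1}$ (which exists because $\ell_{d+1}$ is null-homologous in $X_{L_0}$) with the $0$-surgery disk; the lifts of $\hat F$ to $M^\e$ should provide a natural finite-alphabet branching structure compatible with the shift.

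The main obstacle is the cyclotomic phenomenon itself. In each example computed the cyclotomic factors arise transparently from periodicity in a Seifert surface of $\ell_{d+1}$, but it is not evident why an \emph{arbitrary} simple closed curve with zero linking numbers in $X_{L_0}$ should force cyclotomic behaviour on its lifted homology class. The zero-linking hypothesis is essential: by contrast, Theorem \ref{limit} shows that $\lim M(\D_{L(q)}) = M(\D_L)$ when $\ell_{d+1}$ has nonzero linking with another component, and this limit can take values such as $M({\cal L})=1.17628\ldots$ which are not integers. A proof may ultimately require a geometric characterization of those curves whose class in $H_1 X_L^\e$ has cyclotomic annihilator, perhaps through analysis of how a Seifert surface for $\ell_{d+1}$ meets the Seifert surfaces of $L_0$ in the universal abelian cover, or through a sutured-manifold/JSJ reduction identifying $M$ with pieces whose Alexander invariants are known to have unit Mahler measure.
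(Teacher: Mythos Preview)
The statement you are addressing is a \emph{Conjecture} in the paper; the authors offer no proof, only two supporting examples. There is therefore no ``paper's own proof'' to compare against. Your proposal is likewise not a proof: it is an outline of possible strategies, and you explicitly flag the main obstacle yourself in the final paragraph. That is an honest assessment of the situation, but it should not be presented as a proof proposal.

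There is also a substantive error in your reduction. You claim the conjecture is \emph{equivalent} to the assertion that $M(\D^\e_{\pi_1 M})\in\Z_{\ge 0}$. Only one implication holds. If $f$ is an integer $n$ times a product of generalized cyclotomic polynomials, then indeed $M(f)=|n|$ by Remark~2.2(2)--(3). But the converse fails: an integral polynomial can have integer Mahler measure without being an integer multiple of a product of generalized cyclotomics. For instance $f(t)=2t^2-3t+2$ has both roots on the unit circle, so $M(f)=2$; yet $f$ is irreducible over $\Z$ and is not a constant times a cyclotomic polynomial (cyclotomic polynomials are monic), nor is $f/2$ integral. Thus proving integrality of the Mahler measure would \emph{support} the conjecture but would not establish it. Your plan ``to establish this integrality'' is therefore aimed at a strictly weaker statement.

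A smaller point: in the case $\D_{L_0}\doteq 1$ you write ``$\D^\e_{\pi_1 M}\doteq \l$ as an element of $\Rd$.'' Here $\l$ is a priori a homology class in $H_1 X_L^\e$, not an element of $\Rd$; you are silently using the isomorphism $H_1 X_L^\e\cong\Rd$ coming from the short exact sequence when $H_1 X_{L_0}^\e=0$. That identification deserves to be made explicit, and one should note that for $d>1$ the condition $\D_{L_0}\doteq 1$ does not by itself force $H_1 X_{L_0}^\e=0$.
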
  
 
%%%%%%%%%%%%%%%%%%%%%

%%%%%%%%%%%%%%%%%

\section{Twisted Alexander polynomials.} As in the previous section, let  $(G, \e)$ be a pair consisting of group $G$  with presentation \ref{presentation} and an epimorphism $\e$ to a nontrivial free abelian group $\Z^d$ generated by 
$t_1, \ldots, t_d$. We assume, as we did above, that $\e(x_0) = t_1$. 

Let $R$ be a Noetherian unique factorization domain, and $\rho: G \to GL_NR$  a linear representation. Below we will consider only the case that $R= \Z$ and the image of $\rho$ is a finite group of permutation matrices; in other words, $\rho$ is a representation of $G$ in the group $S_N$ of permutations of $\{1, \ldots, N\}$. 
Define
$$\e\otimes \rho: \Z[G] \to M_N(R[\Z^d])$$
by mapping $g \in G$ to $\e(g) \rho(g)$, and extending linearly. 

Twisted Alexander invariants generalize the (classical) Alexander polynomial by 
incorporating information from the representation $\rho$. They were
introduced by X.-S. Lin in \cite{lin} and later extended by many authors. Particularly relevant here are publications of Wada \cite{wada} and Kirk and Livingston \cite{kl}. The reader is referred to \cite{fvSURVEY} for a comprehensive survey of twisted Alexander invariants.

Wada's approach considers the {\it twisted Alexander matrix}

\begin{equation} \label{twisted.Alex.matrix} A^{\e \otimes \rho}=\bigg[(\e \otimes \rho) \bigg({{\partial r_i}\over {\partial x_j}}\bigg)\bigg]_{1\le i, j \le n}. \end{equation}
We regard $A^{\e \otimes \rho}$ as an $nN \times nN$ matrix over $R[\Z^d]$ by removing inner parentheses. The {\it Alexander-Lin} polynomial $D^{\e,  \rho}_G$ is the greatest common divisor of the maximal minors, well defined up to multiplication by units in $R[\Z^d]$. It is an invariant of the triple $(G, \e, x_0)$ (equivalence defined as for pairs $(G, \e)$ but respecting the distinguished group element $x_0$) and the conjugacy class of the representation $\rho$ (see  \cite{swALP}).  

Dividing $D^{\e,  \rho}_G$ by the determinant of $I - t_1\rho(x_0)$ eliminates the dependence on the distinguished element. The resulting rational function, well defined up to unit multiplication, is often called the {\it Wada invariant} of $(G, \e)$ and $\rho$, denoted here by $W^{\e, \rho}_G$. 

The Wada invariant has a homological interpretation with theoretical advantages over the combinatorial approach. 
Let $Y$ be a finite CW complex with $\pi_1Y \cong G$ having a single 0-cell as well as 1- and 2-cells corresponding to  the generators and relations in (\ref{presentation}). (When $G$ is the group of an oriented link, working with a Wirtinger presentation will ensure that $Y$ is homotopy equivalent to the link exterior.)  Let $\tilde Y$ denote the universal cover  of $Y$, with the structure of a CW complex that is lifted from $Y$. Consider the chain complex
$$C_*(Y; V[\Z^d]_\rho) = (R[\Z^d]\otimes_RV)\otimes_\rho C_*(\tilde Y),$$
where $V= R^N$ is a free module on which $G$ acts via $\rho$, while $C_*(\tilde Y)$ denotes the cellular chain complex of $\tilde Y$ with coefficients in $R$. The group $R[G]$ acts on the left by deck transformations. The tensor product $R[\Z^d]\otimes_R V$ has the structure of a right $R[G]$-module via
$$(p \otimes v)\cdot g = (\e(g)p)\otimes(v \rho(g)),\ {\rm for}\  g \in G.$$
The homology groups $H_*(Y; V[\Z^d])$ of the chain complex are finitely generated $R[\Z^d]$-modules. As above, elementary ideals
$E_k(H_*(Y; V[\Z^d]))$ are defined. The {\it module order} of $H_*(Y; V[\Z^d])$ -- that is, the greatest common divisor $\D_*$ of the elements of its 0th elementary ideal -- is an invariant of $(G, \e)$ and the conjugacy class of $\rho$. In \cite{kl} it is shown that $\D_1/\D_0$ is equal to the Wada invariant $W^{\e, \rho}_G$. 

When $G$ is the group of an oriented link $L$ in a homology 3-sphere, $\D_1$ is called the {\it twisted Alexander polynomial} of $L$ with respect to the augmentation $\e$ and representation $\rho$, which we denote by $\D_L^{\e, \rho}$. Similarly, we denote the Wada invariant by $W_L^{\e,\rho}$. When the augmentation is standard, we omit it from our notation. 

{\sl Henceforth we assume that $\rho$ is a finite-image permutation representation.}  It is easy to see that the denominator $\D_0$ of Wada's invariant is  a product of cyclotomic polynomials. In this case, the Mahler measures of $D_L^\rho$, $W_k^\rho$ and $\D_L^\rho$ are equal. Moreover,  Shapiro's lemma \cite{brown} implies that $H_*(X; V[\Z^d])$ is isomorphic to $H_1(\tilde X; \Z)$, where $\tilde X$ is the $N$-fold cover of the link exterior $X=X_L$ that is induced by $\rho$. 

An analogue of Theorem \ref{growth} was proven in \cite{swDTAP}.
For this we replace $M_\L$ by unbranched abelian cover $X_\L$ of $X_L$ corresponding to the finite-index subgroup $\L \subset \Z^d$. Since $\pi_1 X_\L$ is a subgroup of $G$, the representation $\rho$ restricts. Let $\tilde X_\L$ be the $N$-fold induced cover. We replace $b_\L$ by the order  of the torsion subgroup $H_1(\tilde X_\L;\Z)$, denoted by $b_{\L, \rho}$. Then Theorem 3.10 \cite{swDTAP}  implies that 
$$\log M(\D^{\rho}_L) = \limsup_{\< \L \> \to \infty} {1 \over |\Z^d/\L|} \log b_{\L, \rho}.$$

Formulating a theorem analogous to Theorem \ref{limit} is more problematic, since the groups $\pi_{L(q)}$ are quotients rather than subgroups of $\pi_L$. Our approach passes to an appropriate arithmetic subsequence of $L(q)$. 

Assume as in Section 3 that $L = \ell_1 \cup \cdots \cup \ell_{d+1}$ is an oriented link in $\S^3$ such that $\ell_{d+1}$ has nonzero linking number with some other component. Let $\rho_0: \pi_{L_0} \to GL_N\Z$ be a finite-image permutation representation, and let $r$ denote the order of $\rho(\l)$, where $\l \in \pi_L$ is the class of the longitude of $\ell_{d+1}$. For any positive integer $q$, the group $\pi_{L(rq)}$ is a quotient of $\pi_L$ by the relation $\l^{rq}x_n$, where $x_n$ is a merdianal generator of $\ell_{d+1}$. Consequently, the standard augmentation $\pi_{L_0} \to \Z^d$ extends to the standard augmentation of $\pi_{L(rq)}$, mapping the class of a meridian of $\ell_{d+1}$ to 
$(t_1^{-\l_1} \cdots t_d^{-\l_d})^{rq}$, where $\l_j$ is the linking number of $\ell_j$ and $\ell_{d+1}$. Moreover, 
$\rho_0$ induces a representation $\rho: \pi_{L(rq)} \to GL_N \Z$ that maps
a meridianal generator of $\ell_{d+1}$ trivially.

\begin{theorem} Let $L = \ell_1 \cup \cdots \cup \ell_{d+1}$ be an oriented link in $\S^3$ such that $\ell_{d+1}$ has nonzero linking number with some other component. Let $\l\in \pi_L$ be the class of the longitude of $\ell_{d+1}$.  Let $\rho_0: \pi_{L_0}\to GL_N\Z$ be a finite-image permutation representation and $\rho$ the extension to $\pi_L$ mapping $x_n$ trivially. Then $$\limsup_{q \to \infty} M(\D^\rho_{L(rq)}) = M(\D^ \rho_L),$$ where $r$ is the order of $\rho(\l)$.   \end{theorem}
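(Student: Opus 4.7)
The plan is to mirror Section 3's strategy. I begin with a Wirtinger presentation $\<x_0, \ldots, x_n \mid r_1, \ldots, r_n\>$ of $\pi_L$ with $x_n$ a meridian of $\ell_{d+1}$, so that $\pi_{L(rq)}$ is obtained by adjoining the surgery relator $\l^{rq} x_n$. The proof of Lemma \ref{redundant} is combinatorial and carries over verbatim, letting me discard one Wirtinger relator. Fox calculus followed by $\e \otimes \rho$ then produces a square $nN \times nN$ twisted Alexander matrix $A^{\e \otimes \rho}_\flat(rq)$ whose determinant represents $\D^\rho_{L(rq)}$ modulo a unit of $\Z[\Z^d]$ and the cyclotomic factor $\det(I - t_1 \rho(x_0))$ arising in Wada's normalization. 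The augmentation $\e$ used here is the standard one on $\pi_{L(rq)}$, sending the (dependent) meridian of $\ell_{d+1}$ to $T^{-rq}$, where $T := t_1^{\l_1} \cdots t_d^{\l_d}$.

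Next I isolate the $q$-dependence. The first $(n-1)N$ rows of $A^{\e \otimes \rho}_\flat(rq)$ agree with those of the twisted Alexander matrix of $L$ after specializing $t_{d+1}$ to $T^{-rq}$. The final $N$ rows, from the Fox derivatives of $\l^{rq} x_n$, receive $T^{rq} I_N$ in column $x_n$ and $\bigl(\sum_{k=0}^{rq-1} T^k \rho(\l)^k\bigr)(\e \otimes \rho)(\partial\l/\partial x_j)$ in every other column $x_j$. The crucial property $\rho(\l)^r = I$ telescopes this geometric sum to $\tfrac{T^{rq}-1}{T^r-1}\,S$, where $S := \sum_{k=0}^{r-1} T^k \rho(\l)^k$ is independent of $q$. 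Hence every $q$-dependent entry of $A^{\e \otimes \rho}_\flat(rq)$ is a rational expression in $\vec t$ and the single monomial $v := T^{rq}$; treating $v$ as a formal variable and clearing the factor $(T^r-1)^N$, whose Mahler measure is $1$, produces a Laurent polynomial $P(\vec t, v) \in \Z[\vec t^{\pm 1}, v^{\pm 1}]$ with $M(\D^\rho_{L(rq)}) = M(P(\vec t, T^{rq}))$.

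I then match $P(\vec t, v)$ to $\D^\rho_L(\vec t, v)$, the latter computed from the twisted Alexander matrix of $\pi_L$ using the standard $(d+1)$-variable augmentation $\e_L: \pi_L \to \Z^{d+1}$ that sends the meridian of $\ell_{d+1}$ to $v$, paired with $\rho$. The two matrices differ only in one row, corresponding to the Wirtinger relator at a specific crossing of $\ell_{d+1}$, which in $A^{\e \otimes \rho}_\flat(rq)$ has been replaced by the Fox derivatives of $\l^{rq} x_n$. Using row operations driven by the explicit expression of that relator as a conjugate of $x_n \l x_n^{-1} \l^{-1}$ (as in the proof of Lemma \ref{redundant}), the discrepancy absorbs into a unit factor and a factor of Mahler measure $1$. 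This gives $M(P) = M(\D^\rho_L)$ and reduces the theorem to showing $\limsup_{q \to \infty} M(P(\vec t, T^{rq})) = M(P)$.

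The final step is a Mahler-measure convergence argument. The substitution $v = T^{rq}$ pulls $\log|P|$ back along the group homomorphism $\phi_{rq}: (\S^1)^d \to (\S^1)^{d+1}$, $\vec t \mapsto (\vec t, T(\vec t)^{rq})$, whose image is a $d$-dimensional subtorus on which the pushforward of Haar measure is itself Haar. For almost every $\vec t$, $T(\vec t)$ is not a root of unity, so $\{T(\vec t)^{rq}\}_q$ equidistributes on $\S^1$ by Weyl's criterion. Combining this fiberwise equidistribution with Birkhoff's ergodic theorem and dominated convergence yields the Cesaro convergence
\[
\frac{1}{Q} \sum_{q=1}^Q \log M(P(\vec t, T^{rq})) \longrightarrow \log M(P),
\]
from which $\limsup \geq M(\D^\rho_L)$ is immediate. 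The matching upper bound $\limsup \leq M(\D^\rho_L)$ is the main technical obstacle; I expect it to follow from a Boyd--Lawton-type convergence for Mahler measure under the monomial substitutions $v = t_1^{rq\l_1}\cdots t_d^{rq\l_d}$, or alternatively from an upper semicontinuity of Mahler measure applied fiberwise in $\vec t$ and then integrated.
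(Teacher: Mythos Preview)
Your approach via Fox calculus on the surgery presentation is a reasonable start, but there are two genuine gaps.

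\textbf{The identification $M(P)=M(\D^\rho_L)$.} You assert that the discrepancy between your matrix $A^{\e\otimes\rho}_\flat(rq)$ (with the surgery row) and the twisted Alexander matrix of $L$ (with the Wirtinger row $r_k = x_n\l x_n^{-1}\l^{-1}$) ``absorbs into a unit factor and a factor of Mahler measure $1$'' via row operations. But the two rows in question are, in the column $x_j$ ($j<n$), respectively
\[
\frac{T^{rq}-1}{T^r-1}\,S\cdot(\e\otimes\rho)\Bigl(\frac{\partial\l}{\partial x_j}\Bigr)
\quad\text{versus}\quad
(vI - T\rho(\l))\cdot(\e_L\otimes\rho)\Bigl(\frac{\partial\l}{\partial x_j}\Bigr),
\]
and in column $x_n$, $T^{rq}I$ versus $I-T\rho(\l)$. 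No single left-multiplication by an invertible $N\times N$ block carries one row to the other, so ``row operations'' alone do not do the job; you would also need to use the linear dependence among the Wirtinger rows, and you have not done so. The paper bypasses this entirely: it applies the long exact sequence of the pair $X_L\subset X_{L(rq)}$ with coefficients in $V[\Z^d]$, uses excision to identify $H_2(X_{L(rq)},X_L;V[\Z^d])\cong\bigl(\Z[\Z^d]/(T^{rq}-1)\bigr)^N$, and reads off directly that the module orders satisfy $\D^{\e,\rho}_L = \Phi\cdot\D^\rho_{L(rq)}$ with $\Phi$ a product of generalized cyclotomic polynomials. That is what gives $M(\D^\rho_{L(rq)})=M(D^{\e,\rho}_L)$ cleanly.

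\textbf{The convergence step.} Your equidistribution argument via Weyl and Birkhoff yields only Ces\`aro convergence of $\log M(P(\vec t,T^{rq}))$, and you explicitly leave the upper bound for $\limsup$ unproven. This is unnecessary: once you have a fixed Laurent polynomial $P(\vec t,v)$ with $M(\D^\rho_{L(rq)})=M(P(\vec t,T^{rq}))$, the Boyd--Lawton theorem (invoked by the paper as Corollary~3.2 of \cite{swJLM}) gives the honest limit
\[
\lim_{q\to\infty} M\bigl(P(t_1,\ldots,t_d, T^{rq})\bigr) = M\bigl(P(t_1,\ldots,t_d,v)\bigr),
\]
since some $\l_i\ne 0$ forces the exponent vector of the substitution to go to infinity. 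No separate upper and lower bounds are needed, and the $\limsup$ in the statement is in fact a limit.
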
 

\begin{proof} Our proof generalizes arguments of \cite{kaw}, \cite{swJLM}. 
Let $\e: \pi_L \to \Z^d$ be the extension of the standard augmentation of $\pi_{L_0}$ that maps a meridian of $\ell_{d+1}$ to $(t_1^{-\l_1}\cdots t_d^{-\l_d })^{rq}$. 
 Consider the portion of the long exact sequence of the pair $X_L \subset X_{L(rq)}$: 

$$ \cdots \to\ H_2(X_{L(rq)}; V[\Z^d]) \ {\buildrel j_* \over \longrightarrow}\ H_2(X_{L(rq)}, X_L; V[\Z^d]) \ {\buildrel \partial_* \over \longrightarrow}\ H_1(X_{L}; V[\Z^d])$$ $$ \ {\buildrel i_* \over \longrightarrow}\ H_1(X_{L(rq)}; V[\Z^d])\ {\buildrel j_* \over \longrightarrow}\ H_1(X_{L(rq)}, X_L; V[\Z^d]))\ \to \cdots $$
By the excision isomorphism, $H_q(X_{L(rq)}, X_L; V[\Z^d]))$ is trivial unless 
$q=2$. 
One checks that 
$$H_2(X_{L(rq)}, X_L; V[\Z^d])) \cong  \bigr(\Z[\Z^d]/(t_1^{-\l_1}\cdots t_d^{-\l_d})^{rq} - 1)\bigr)^N $$ and hence its module order is a product $\Phi$ of generalized cyclotomic polynomials. 
Consequently, there exists a short exact sequence
$$ 0 \to\   \Z[\Z^d]^N/(f) \ {\buildrel \partial_* \over \longrightarrow}\ H_1(X_L; V[\Z^d]) \ {\buildrel i_* \over \longrightarrow}\ H_1(X_{L(rq)}; V[\Z^d]) \to 0, $$
where $f$ is a factor of $\Phi$. It follows that $\D^\rho_L$, the module order of $H_1(X_L; V[\Z^d])$, is the product of $f$ and $\D^\rho_{L(rq)}$, the module order  of
$H_1(X_{L(rq)}, V[\Z^d])$ (see \cite{kaw}, for example). Hence $\D^\rho_L$ vanishes if and only if $\D^\rho_{L(rq)}$ vanishes. In such a case, the conclusion of the theorem is trivial. 
Therefore we assume that $\D^\rho_{L(rq)}$ and $\D^\rho_L$ are nonzero.

The condition that $\D^\rho_{L(rq)}$ is nonzero implies that $H_2(X_{L(rq)}; V[\Z^d])=0$ (see Proposition 2 (5) of \cite{fvSURVEY}). The exact sequence above becomes short exact: 
$$ 0 \to\ H_2(X_{L(rq)}, X_L; V[\Z^d]) \ {\buildrel \partial_* \over \longrightarrow}\ H_1(X_L; V[\Z^d]) \ {\buildrel i_* \over \longrightarrow}\ H_1(X_{L(rq)}; V[\Z^d]) \to 0. $$
Hence $\D^\rho_{L(q)}$ is the product of $\Phi$ and the module order of $H_1(X_L; V[\Z^d])$. 

Recall that the Alexander-Lin polynomial $D_L^{\e, \rho}$ has the same Mahler measure as the twisted Alexander polynomial $\D_L^{\e, \rho}$. Working with the former is relatively easy: it is the determinant of the twisted Alexander matrix (\ref{twisted.Alex.matrix}) with $t_{d+1}$ replaced by 
$ (t_1^{-\l_1}\cdots t_d^{-\l_d })^{rq}$. 
We conclude that the Mahler measure of $\D^\rho_{L(rq)}$ is equal to that of $D^{\e, \rho}_L(t_1, \ldots, 
t_d, (t_1^{-\l_1}\cdots t_d^{-\l_d })^{rq})$. Corollary 3.2 of \cite{swJLM}, a consequence of a lemma of D. Boyd and W. Lawton \cite{lawton} (see also \cite{schmidt}), implies that  the limit of the Mahler measure of $D^{\e, \rho}_L(t_1, \ldots, 
t_d, (t_1^{-\l_1}\cdots t_d^{-\l_d })^{rq})$ as $q$ increases without bound is $M(D_L^{\rho})$, which is equal to the Mahler measure of $\D_L^\rho$. 
\end{proof} 

\begin{example} Consider the oriented 2-component link $L= \ell_1 \cup \ell_2$ in Figure \ref{pretzel}. The knot $L_0=\ell_1$ is the torus knot $5_1$, drawn as a pretzel knot. The knot $L(3)$ is the $(-2, 3, 7)$-pretzel knot. 
Let $\rho: \pi_{L_0} \to GL_5\Z$ be the permutation representation mapping 
$x_0$, $x_1$ and $x_2$ to matrices corresponding to the 5-cycles $(12345), (13542)$ and $(14235)$, respectively. The class $\lambda$ of the longitude of $\ell_2$ maps to a 3-cycle.  The twisted Alexander polynomial $\D_{L(3)}$ is the product of Lehmer's polynomial ${\cal L}(t)$ evaluated at $-t_1$ and a second, irreducible polynomial $f(t_1)$ of degree 34. The Mahler measure $M(f)$ is $1.7436\ldots.$ The fact that the (classical) Alexander polynomial is a factor of any twisted Alexander polynomial, whenever a finite-image permutation representation is employed, is a well-known consequence of the fact that the representation fixes a 1-dimensional subspace. (See discussion preceding Corollary \ref{same}.)

The limit $\lim_{q\to \infty} M(\D_{L(3q)})$ is approximately 4.18.

\end{example}

\begin{figure}
\begin{center}
\includegraphics[height=4 in]{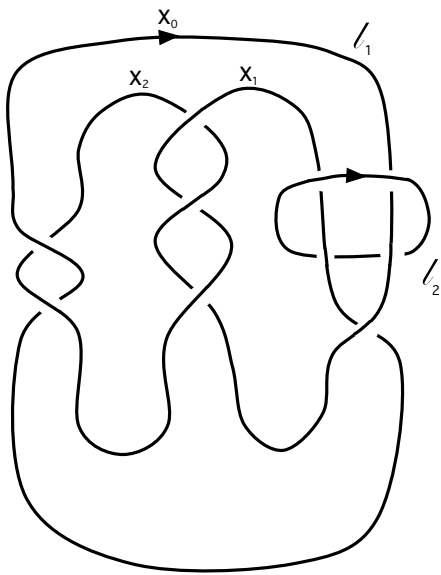}
\caption{Pretzel Link}
\label{pretzel}
\end{center}
\end{figure}

Theorem \ref{zerotwist} also generalizes.

\begin{theorem} \label{twisted.zerotwist} Assume that $L = \ell_1 \cup \cdots \cup \ell_{d+1}$ is an oriented $(d+1)$-component link in the 3-sphere $\S^3$ such that $\ell_{d+1}$ has zero linking number with each component of the sublink  $L_0= \ell_1 \cup \cdots \cup \ell_d$. Let $\rho_0: \pi_{L_0} \to GL_N\Z$ be a finite-image permutation representation and $\rho$ the extension to $\pi_L$ mapping $x_n$ trivially. Let $r$ denote the order of the permutation $\rho(\l)$.  Then
there exists a polynomial $P(t_1, \ldots, t_d, q)$  with the following properties: 
\item{} (1) $P=P_0+qP_1 + \cdots + q^N P_N$ with $P_0, \ldots , P_N \in\Z[t_1, \ldots, t_d]$; 
\item{} (2) For any positive integer $q$, $P(t_1, \ldots, t_d, q) \doteq \D^ \rho_{L(rq)}(t_1, \ldots, t_d)$; 
\item{} (3) $\lim_{q \to \infty} (1/q^N)P(t_1, \ldots, t_d, q) \doteq \D^{\e, \rho}_{\pi_L/\<\<\l^r\>\>}$, \smallskip

\ni where $\pi_L/\<\<\l^r\>\>$ is the quotient of $\pi_L$ by the normal subgroup generated by $\l^r$, $\e: \pi_1 M(r) \to \Z^d$ is induced by the augmentation of $\pi_L$ that is standard on $\pi_{L_0}$ but maps a meridian of $\ell_{d+1}$ trivially.
\end{theorem}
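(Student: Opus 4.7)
The plan is to mimic the proof of Theorem~\ref{zerotwist}, working over the enlarged coefficient system from $\rho$. Starting with the Wirtinger presentation $\<x_0, \ldots, x_n \mid r_1, \ldots, r_n\>$ of $\pi_L$ (with $x_n$ a meridian of $\ell_{d+1}$ and the omitted Wirtinger relation containing no meridian of $\ell_{d+1}$), I adjoin the relator $\l^{rq}x_n$ to obtain a presentation of $\pi_{L(rq)}$. After arranging by nugatory crossings that $\l$ contains no $x_n^{\pm 1}$, Lemma~\ref{redundant} lets me delete a redundant $r_i$, leaving an $nN \times nN$ twisted Alexander matrix $A(q)$ whose determinant is $\D^\rho_{L(rq)}$ up to the standard normalizing factor.

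The key computation concerns the bottom $N$ rows of $A(q)$, coming from Fox derivatives of $\l^{rq}x_n$ under $\e\otimes\rho$. For $j\neq n$, using $\e(\l)=1$ (zero linking) and $\rho(\l)^r=I$ (definition of $r$), one finds
\[
(\e\otimes\rho)\!\left(\frac{\partial \l^{rq}}{\partial x_j}\right)=\sum_{k=0}^{rq-1}\rho(\l)^k\,(\e\otimes\rho)\!\left(\frac{\partial\l}{\partial x_j}\right)=q\,\bar\l\,(\e\otimes\rho)\!\left(\frac{\partial\l}{\partial x_j}\right),
\]
where $\bar\l:=\sum_{k=0}^{r-1}\rho(\l)^k$; and the $x_n$-column entry is $(\e\otimes\rho)(\l^{rq})=I$. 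Thus the bottom block row equals $B_0+qB_1$, with $B_0$ the identity in the $x_n$-column (zero elsewhere) and $B_1$ carrying $\bar\l\,(\e\otimes\rho)(\partial\l/\partial x_j)$ in column $j\neq n$ (zero in the $x_n$-column). Since only these $N$ rows depend on $q$ and each depends affinely, multilinearity of the determinant yields $\det A(q)=P_0+qP_1+\cdots+q^N P_N$, establishing (1) and (2). The leading coefficient $P_N$ equals $\det A'$, where $A'$ is $A(q)$ with the bottom block row replaced by $B_1$. Since $B_1$ is exactly $(\e\otimes\rho)(\partial\l^r/\partial x_j)$ in each column (the zero in the $x_n$-column matches $\partial\l^r/\partial x_n=0$, as $\l$ avoids $x_n$), one recognizes $A'$ as the twisted Alexander matrix of the deficiency-one presentation $\<x_0,\ldots,x_n\mid r_1,\ldots,\hat r_i,\ldots,r_n,\l^r\>$.

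The principal obstacle is identifying $\det A'$ with $\D^{\e,\rho}_{\pi_L/\<\<\l^r\>\>}$. In the classical case $r=1$ the deleted Wirtinger relator $r_i=x_n\l x_n^{-1}\l^{-1}$ becomes trivially true once $\l=1$, so the displayed presentation genuinely presents $\pi_L/\<\<\l\>\>$; for $r>1$ the relation $\l^r=1$ does not force $[x_n,\l]=1$, and the displayed presentation may define a strictly larger group. I would handle this by a relative long-exact-sequence argument: letting $Y_1$ be the 2-complex of the displayed presentation and $Y_2=Y_1\cup_{r_i}D^2$ (so $\pi_1 Y_2=\pi_L/\<\<\l^r\>\>$), excision forces $H_*(Y_2,Y_1;V[\Z^d])$ to be concentrated in degree~$2$, so $H_1(Y_2;V[\Z^d])$ is the cokernel of a connecting map whose matrix (in block form) is $(\e\otimes\rho)(\partial r_i/\partial x_j)=(0,\ldots,0,I-\rho(\l),0,\ldots,0)$, supported in the $x_n$-column (using $\rho(x_n)=I$ and that $\l$ avoids $x_n$). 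The annihilation $(I-\rho(\l))\bar\l=I-\rho(\l)^r=0$ should force this additional image to lie within the submodule already produced by the $\bar\l$-row $B_1$, so that the module orders coincide up to a unit, completing~(3).
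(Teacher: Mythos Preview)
Your treatment of (1) and (2) is essentially the paper's own argument: the same Fox-calculus computation of the bottom $N$ rows (using $\e(\l)=1$ and $\rho(\l)^r=I$ to extract the factor $q\bar\l$), the same appeal to Lemma~\ref{redundant} to square the matrix, and the same multilinearity/degree-$N$ observation for the determinant.

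On (3) you are in fact more scrupulous than the paper. The paper simply replaces the bottom block-row by $B_1$, recognizes it as the $(\e\otimes\rho)$-image of the Fox derivative row of $\l^r$, and asserts that the resulting square matrix computes $\D^{\e,\rho}_{\pi_L/\<\<\l^r\>\>}$ (after multiplying by the $q$-independent quotient $\D_0/\det(I-t_1\rho(x_0))$). It does not revisit whether the deletion of the row-block $r_i$ remains legitimate once $\l^{rq}x_n$ has been replaced by $\l^r$; you are right that Lemma~\ref{redundant} as stated does not cover this, since it uses $x_n=\l^{-q}$ to kill $[x_n,\l]$.

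Your proposed homological patch, however, contains a concrete error. Taking $r_i$ (after reduction along $\ell_{d+1}$) to be $x_n\l x_n^{-1}\l^{-1}$, with $\rho(x_n)=I$, $\e(x_n)=1$, and $\l$ avoiding $x_n$, one finds for $j\ne n$
\[
(\e\otimes\rho)\Big(\frac{\partial [x_n,\l]}{\partial x_j}\Big)=(I-\rho(\l))\,(\e\otimes\rho)\Big(\frac{\partial \l}{\partial x_j}\Big),
\]
which is generally nonzero; only the $x_n$-column entry is $I-\rho(\l)$. So the boundary row is \emph{not} supported in the $x_n$-column, and your last sentence (absorbing it into the $\bar\l$-row $B_1$, which is zero in the $x_n$-column) does not follow. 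A more promising route is to note that the entire $r_i$-row-block equals $(I-\rho(\l))$ applied to a single row-block, and to exploit $\bar\l\,(I-\rho(\l))=0$ together with the fact that $B_1=\bar\l\cdot(\ldots)$ to compare the full $(n+1)$-row presentation of $H_1$ with the $n$-row one; but this needs to be carried out carefully, and at the level written your argument does not close.
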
 

\begin{proof} Construct the twisted Alexander matrix (\ref{twisted.Alex.matrix}) for $L$, and adjoin a block of $N$ rows corresponding to the relator $\l^{rq}x_n$. We obtain a twisted Alexander matrix $A^{\e, \rho}$ for $L(rq)$. Lemma \ref{redundant} enables us to remove a row of $N\times N$ blocks not involving meridians $\ell_{d+1}$, thereby obtaining a square matrix $A^{\e, \rho}_\flat$.

As in the proof of Theorem \ref{zerotwist}, we add nugatory crossings to $\ell_{d+1}$ to ensure that the word representing the longitude $\l$ does not contain the generator $x_n$. Consider the last row-block of $A^{\e, \rho}_\flat$. In the last column-block, corresponding to $x_n$, we see the $N \times N$ identity matrix. In each of the other nonzero column-blocks we find a matrix of the form $\pm q ( \sum_{j=0}^r \rho(\l)^j)Q$, where $Q$ is some permutation matrix. By conjugating the representation $\rho$, we can assume that the matrix $\rho(\l)$ has diagonal block form $B_1 \oplus \cdots \oplus B_s$, where each $B_i$ is a permutation matrix that acts transitively on its symbols.  We eliminate $Q$ by multiplying each block matrix in the column on the right by $Q^{-1}$,  a change of basis that permutes the generators of the column-block. (We continue to use the symbol $A^{\e, \rho}_\flat$ for this matrix.) 

Each 
$\sum_{j=0}^r \rho(\l)^j$  is a diagonal block matrix with blocks equal to constant matrices, the constant being $r/r_i$, where $r_i$ is the order of $B_i$. Dividing each row in the last row-block by $q$, and then taking the limit as $q \to \infty$, produces a new matrix $\bar A^{\e, \rho}_\flat$. It is identical with $A^{\e, \rho}_\flat$ everywhere except in the last row-block, where $q$ has been removed and the identity matrix in the last block has been replaced by the zero matrix. The new row-block corresponds to 
the relation $\l^r$, the relation that we would add to $\pi_L$ in order to obtain ${\pi_L/\<\<\l^r\>\>}$ instead of $\pi_{L(q)}$.

The twisted Alexander polynomials $\D_{L(q)}^{\rho}$ is  equal to 
$D_{L(q)}^{\rho} = \det A^{\e, \rho}_\flat$,  multiplied by the quotient $\D_0/(\det (I - t_1 \rho(x_0))$, where $\D_0$ is the module order of $H_0(X_{L(q)}; V[\Z^d])$. Since $\det (I - t_1 \rho(x_0))$ and $\D_0$ are independent of the relators of $\pi_{L(q)}$, the quotient does not involve $q$. Hence a polynomial $P(t_1, \ldots, t_d, q)$ satisfying (1) and (2) in the statement of the theorem. 

Similarly, the twisted polynomial $\D^{\e, \rho}_{{\pi_L/\<\<\l^r\>\>}}$ is the determinant of $\bar A^{\e, \rho}_\flat$ multiplied by the same quotient. 
The last claim of the theorem follows. 
 \end{proof}

Since the weight vector $(1, \ldots, 1)$ is an eigenvector with eigenvalue 1 for any any permutation representation, all finite-permutation representations have trivial, invariant 1-dimensional subrepresentations.  (For this reason, the classical Alexander polynomial is always a factor of the twisted Alexander polynomials considered here.) This observation, applied to the matrix $A^{\e, \rho}_\flat$ in the proof of Theorem \ref{twisted.zerotwist},  
establishes the following. 

\begin{cor} \label{same} Under the hypotheses of Theorem \ref{twisted.zerotwist}, the sequence of twisted Alexander polynomials $\D^\rho_{L(q)}$ is constant whenever the sequence of (untwisted) Alexander polynomials $\D_{L(q)}$ is constant. \end{cor}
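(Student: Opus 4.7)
The plan is to exploit the decomposition $\rho = \mathbf{1} \oplus \rho'$ coming from the trivial 1-dimensional subrepresentation spanned by $v_0 = (1,\ldots,1)$, a vector fixed by every permutation matrix.

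First I would change the basis of the coefficient module $\Q^N$ so that $v_0$ is the first basis vector and the remaining vectors span $v_0^\perp$. Since every $\rho(g)$ both fixes $v_0$ and preserves $v_0^\perp$, we have $\rho(g) = 1 \oplus \rho'(g)$ in this basis, and every Fox block $(\e \otimes \rho)(\partial r_i/\partial x_j)$ becomes block diagonal with corners $\e(\partial r_i/\partial x_j)$ and $(\e \otimes \rho')(\partial r_i/\partial x_j)$. For the last row-block encoding the surgery relator $\l^{rq}x_n$, a brief calculation using $v_0^T \rho(g) = v_0^T$ yields $v_0^T \sum_{k=0}^{r-1} \rho(\l)^k (\e \otimes \rho)(\partial \l/\partial x_j) = r\, \e(\partial \l/\partial x_j)\, v_0^T$, so the block-diagonal structure extends to the $q$-dependent row-block as well. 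Reordering rows and columns to gather all $\mathbf{1}$-coordinates first puts $A^{\e,\rho}_\flat$ in block-diagonal form $A^\e_\flat \oplus A^{\e,\rho'}_\flat$, where $A^\e_\flat$ is precisely the classical matrix of Theorem \ref{zerotwist} for $L(rq)$.

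Taking determinants yields the factorization $\D^\rho_{L(rq)} \doteq \D_{L(rq)} \cdot \D^{\rho'}_{L(rq)}$, and iterating the decomposition --- peeling off one copy of $\mathbf{1}$ at a time whenever the remaining complement still has a $\pi_L$-invariant vector --- produces
$$\D^\rho_{L(rq)} \doteq \D_{L(rq)}^{k} \cdot \D^\tau_{L(rq)},$$
where $k$ is the number of orbits of $\rho(\pi_L)$ on $\{1,\ldots,N\}$ and $\tau$ has no trivial subrepresentation. The hypothesis that $\D_{L(q)}$ is constant in $q$ immediately handles the factor $\D_{L(rq)}^{k}$.

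The remaining task is to show that $\D^\tau_{L(rq)}$ is constant in $q$. For this I would examine the last row-block of $A^{\e,\tau}_\flat$: its $q$-dependence enters only through the matrix $\sum_{k=0}^{r-1}\tau(\l)^k$, which equals $r$ times the orthogonal projection onto the $\tau(\l)$-invariant subspace of the representation space of $\tau$. The main obstacle is to argue that the row dependence among the rows of $A^\e_\flat$ forced by the constancy of $\D_{L(q)}$ --- equivalently, by the vanishing of the limit polynomial $\D^\e_{\pi_1 M}$ of Theorem \ref{zerotwist}, or by the homological condition of Theorem \ref{characterize} --- lifts through this projection to an analogous dependence in $A^{\e,\tau}_\flat$. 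Showing this rigorously is the delicate step, and it is here that the absence of a $\pi_L$-invariant vector in $\tau$ should combine with the explicit structure of $\sum_{k=0}^{r-1}\tau(\l)^k$ to force all $q$-dependent minors of $A^{\e,\tau}_\flat$ to vanish.
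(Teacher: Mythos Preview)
Your approach is the same as the paper's: both use the weight vector $(1,\ldots,1)$, fixed by every permutation matrix, to split off a trivial subrepresentation and thereby block-diagonalize $A^{\e,\rho}_\flat$ with the classical matrix $A^\e_\flat$ as one summand. The paper's entire argument is the sentence immediately preceding the corollary --- essentially your first two paragraphs --- and it stops there.

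You are right to flag the remaining step as a genuine gap. From the factorization $\det A^{\e,\rho}_\flat \doteq \det A^\e_\flat \cdot \det A^{\e,\rho'}_\flat$ one gets only that the \emph{leading} $q$-coefficient $P_N$ of the twisted polynomial vanishes when $\D_{L(q)}$ is constant (since $P_N$ is the product of the linear coefficient $P_1^{\mathrm{cl}}$ of the classical factor with the top coefficient of the complementary factor), not that all of $P_1,\ldots,P_N$ vanish. The complementary factor $\det A^{\e,\rho'}_\flat$ is itself a polynomial in $q$ of degree up to $N-1$, and nothing in the weight-vector observation forces it to be constant. Your honest acknowledgment that ``showing this rigorously is the delicate step'' is accurate; the paper's one-line justification does not supply this argument either. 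So your proposal is at least as complete as the paper's own proof, and more candid about where the difficulty lies.
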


%%%%%%%%%%%%%% BIBLIOGRAPHY %%%%%%%

\end{document}